\documentclass[11pt,oneside]{amsart}
\usepackage{fouriernc} 

\usepackage{Environments}   
\usepackage{Definitions}    
\usepackage{PageSetup}      

\usepackage{multicol}



\makeatletter
\newcommand{\xRightarrow}[2][]{\ext@arrow 0359\Rightarrowfill@{#1}{#2}}
\makeatother

\theoremstyle{plain}
\newtheorem*{problem}{Problem}

\hypersetup{
 pdfkeywords={lax maps,strict maps,2-monad},
 pdfauthor={Nick Gurski,Niles Johnson,Angelica Osorno},
}

\subjclass[2010]{55U35, 18C20, 19D23, 18A25, 18D50}

%


\usepackage{AuthorInfo}     

\title{Extending homotopy theories across adjunctions}
\authorGurski
\authorJohnson
\authorOsorno
\date{2016-12-07}

\begin{document}

\begin{abstract}
  Constructions of spectra from symmetric monoidal categories 
  are typically functorial with respect to strict structure-preserving
  maps, but often the maps of interest are merely lax monoidal.  
  We describe conditions under which one can transport
  the weak equivalences from one category to another with the same objects and
  a broader class of maps. Under mild hypotheses this process produces an 
  equivalence of homotopy theories.  We describe examples including algebras over an
  operad, such as symmetric monoidal categories and $n$-fold monoidal
  categories; and diagram categories, such as $\Gamma$-categories.  
\end{abstract}

\maketitle

\section*{Introduction}

The classifying space functor from categories to topological spaces
provides a way of constructing spaces with certain algebraic
structure. Of particular importance are infinite loop space machines,
which construct spectra out of structured categories such as 
symmetric monoidal categories
\cite{Sta1971Hspaces,Qui1973Higher,May1974Einfty,Seg74Categories,WaldKtheory,EM2006Rings,May09construction,Oso10Spectra}. 
The discussion of the functoriality of these constructions is somewhat
nuanced
due to the range of possible morphisms one might choose.  These 
morphisms 
differ in \emph{strength}, the degree to which the
underlying functors of structured categories preserve the
structure. 

It is often the case that such machines are
obviously functorial with respect to maps that strictly preserve the ambient
structure. This is the case, for example, for the operadic machine and
maps of symmetric monoidal categories. The maps that arise in
practice however---for example, the functors of
module categories induced by a morphism of commutative rings---are
typically not strict, but strong or merely lax, meaning that they
preserve monoidal structure up to coherent isomorphism or merely
coherent morphism.  One way to 
handle such variation is to
construct variant machinery for each type of morphism and prove that 
the corresponding
constructions are equivalent. This allows one to prove general
theorems about the strict case, for example, Segal machinery and
strict maps of $\Ga$-categories, but apply them to the more broadly useful
strong or lax case, for example, Segal machinery and lax
maps.  Such an approach appears in a number of places in the
literature, for example, in \cite[\S3]{Man10Inverse}. 

In this paper we consider a more systematic approach: a direct
comparison of the homotopy theories arising from structured categories
and maps of various strength.
For this purpose, 
we discuss homotopy theory in the generality of relative categories.
A relative category is merely a category $\cC$ equipped with a
subcategory $\cW$ containing all of the objects.  The morphisms in 
this
subcategory then play the role of weak equivalences.  A pair $(\cC, 
\cW)$
presents a homotopy theory \cite{Rez01Model,BK12Characterization}, 
and such a presentation neatly hides, but crucially still retains, 
higher homotopical information, such as mapping spaces, that is not 
present in the bare homotopy category.  

The central problem we address in this paper may then be 
described as follows. 
\begin{problem}
For a homotopy theory $(\cC, \cW)$, give 
criteria for enlarging the class of morphisms in $\cC$ to give a 
new category $\cC'$ with a larger class of weak 
equivalences $\cW'$ such that the inclusion 
$(\cC, \cW) \hookrightarrow (\cC', \cW')$ is an equivalence of 
homotopy theories.  
\end{problem}
\noindent In other words, how can we replace the morphisms in $\cC$ with more 
flexible ones without changing the homotopy theory?  The advantages 
of such a strategy are well-known: the smaller class of morphisms 
is likely more amenable to abstract manipulation, while the larger 
class will often arise in examples of interest.

Our first main result, \cref{thm:equiv-conditions-left-merged}, gives
general conditions under which one can extend the class of weak
equivalences via an adjunction
\[
  \begin{xy}
    0;<15mm,0mm>:<0mm,10mm>:: 
    (-1,0)*+{\cC_\tau}="ct";
    (1,0)*+{\cC_\la,}="cl";
    {\ar@/_3mm/_-{i} "ct"; "cl"};
    {\ar@/_3mm/_-{Q} "cl"; "ct"};
    (0,0)*+{\bot}="v";
  \end{xy}
\]
with $i$ the identity on objects.  Moreover, we prove that this extension is
unique and that the resulting homotopy theories are equivalent.  This
should be seen as the relative-categorical analogue of a very strong
kind of transferred model structure for Quillen model categories (see, for example, \cite{CransQuillen}).

Our second main result, \cref{thm:algebra-rel-var-adj-equiv}, takes
$\cC_\tau$, respectively $\cC_\la$, to be categories of algebras and
strict, respectively lax, maps for a 2-monad $T$ on a 2-category
$\cK$.  In particular, $T$ may be the 2-monad on $\cK = \Cat$ whose
algebras are symmetric monoidal categories.  This special case
provides an enhancement of previous work in the case of symmetric
monoidal categories: Thomason and Mandell show that the corresponding
homotopy categories are equivalent after localizing stable
equivalences \cite[Lemma 1.9.2]{Tho95Symmetric} and weak equivalences
\cite[Theorem 3.9]{Man10Inverse}, respectively.  Many other examples
of interest arise in this way, and we describe a number of them in
detail.

We choose the framework of relative
categories, rather than Quillen model structures, as many of the
categories we encounter are not well-behaved enough to construct 
model
structures.  For example, one variant of our results (see 
\cref{thm:sym-mon-cat-st-eq}) shows that the 
homotopy theory
of symmetric monoidal categories using strict symmetric monoidal
functors and stable equivalences extends uniquely to an equivalent homotopy theory on the category 
of symmetric
monoidal categories using \emph{lax} symmetric monoidal functors. 
While it is straightforward to define compatible weak
equivalences in these categories, 
the
latter category is neither complete nor cocomplete so, in particular,
constructing a model structure via the small object argument is not
possible.

\subsection*{Outline}
In \cref{sec:adjunctions-creating-weak-equivalences} we recall basic
notions of relative categories and give our first main result
regarding equivalences of homotopy theories for strict and lax maps.

In \cref{sec:applications} we apply the results of
\cref{sec:adjunctions-creating-weak-equivalences} to the different
morphism variants for algebras over a 2-monad using the factorization
system techniques of Bourke and Garner \cite{BG2014AWFSII}.
We then go on to give
the following examples: symmetric monoidal categories and $n$-fold
monoidal categories (\cref{sec:K-is-Cat}); categories with group
actions (\cref{sec:diagrams}); and $\Ga$-categories or
$\Ga$-2-categories (\cref{sec:reduced-diagrams}).  In each case we
discuss interesting map variants, classes of weak equivalences, and
explicitly state the resulting equivalence of homotopy theories.

In \cref{sec:2D-monadicity} we recall Bourke's theory of 2-dimensional
monadicity \cite{Bou2014Two}.  We use this theory to recognize some
naturally-occurring morphisms as the lax algebra morphisms for various
2-monads, thus completing the proofs required for some of the examples
in \cref{sec:applications}.

\addtocontents{toc}{\SkipTocEntry}
\subsection*{Acknowledgments} 

The first author was supported by EPSRC EP/K007343/1. The third author
was partially supported by a grant from the Simons Foundation
(\#359449, Ang\'{e}lica Osorno).  The authors thank Mark Behrens for a
number of useful comments.

\section{Adjunctions creating weak equivalences}
\label{sec:adjunctions-creating-weak-equivalences}

In this section we develop the fundamental machinery to extend a
notion of weak equivalence from a given category to one with the same
set of objects and a larger class of morphisms.
We give conditions which guarantee that this extension yields an
equivalence of homotopy theories.

To begin, we recall the elementary notions of relative categories. For
more details, see
\cite{DK80Simplicial,Rez01Model,BK12Characterization}.

\begin{defn}
A \emph{relative category} is a pair $(\cC,\cW)$ in which $\cC$ is a
category and 
$\cW$ is a subcategory of $\cC$ containing all of the objects.  A
\emph{relative functor} $F \cn (\cC,\cW) \to (\cC',\cW')$ is a functor
$F \cn \cC \to \cC'$ such that $F$ restricts to a functor
$\cW \to \cW'$. A \emph{relative adjunction} is an adjunction
 \begin{equation}\label{eqn:reladj}
  \begin{xy}
    0;<15mm,0mm>:<0mm,10mm>:: 
    (-1,0)*+{(\cC, \cW)}="ct";
    (1,0)*+{(\cD, \cV),}="cl";
    {\ar@/_3mm/_-{U} "ct"; "cl"};
    {\ar@/_3mm/_-{F} "cl"; "ct"};
    (0,0)*+{\bot}="v";
  \end{xy}
  \end{equation}
  where $F$ and $U$ are relative functors.
\end{defn}

\begin{defn}
  A \emph{category with weak equivalences} is a relative category
  $(\cC,\cW),$ where $\cW$ contains all isomorphisms and satisfies the
  2-out-of-3 property.  We informally refer to a category with weak
  equivalences as a homotopy theory.
\end{defn}

\begin{defn}
  Let $(\cC,\cW)$ and $(\cD,\cV)$ be categories with weak
  equivalences.  We say a functor $F\cn\cC \to \cD$ \emph{creates weak
    equivalences} if for each morphism $f$ of $\cC$, $f \in \cW$ if
  and only if $Ff \in \cV$.
\end{defn}

We now recall the definition of equivalence between homotopy theories \cite{Rez01Model}.
This notion is equivalent to the requirement that the induced map on
hammock localizations be a DK-equivalence, and implies that the
induced map on categorical localizations is an equivalence \cite{BK12Characterization}.

\begin{defn}
  A relative functor $F \cn (\cC,\cW) \to (\cD,\cV)$ is an
  \emph{equivalence of homotopy theories} if, in the complete Segal
  space model structure, the induced map on fibrant replacements of
  classification diagrams is a weak equivalence.  
\end{defn}
  
\begin{convention}
  Given a collection of weak equivalences, $\cW$, and a natural
  transformation $\eta$, we say that $\eta$ is a weak equivalence and
  write $\eta \in \cW$ if each component of $\eta$ is in $\cW$.
\end{convention}

For reference, we record the following observation.  Further
discussion appears in \cite[2.9]{GJO2017KTheory}.

\begin{lem}\label{lem:ajd-equiv-is-equiv}
  A relative adjunction whose unit and counit are weak equivalences
  induces an equivalence of homotopy theories.
\end{lem}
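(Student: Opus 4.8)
The plan is to work in the homotopy category $\mathrm{Ho}$ of the complete Segal space model structure on simplicial spaces. Write $N$ for the classification-diagram functor from relative categories to simplicial spaces; it is strictly functorial, and by the definition above a relative functor $G$ is an equivalence of homotopy theories exactly when its image $[NG]$ in $\mathrm{Ho}$ is an isomorphism: a map between fibrant objects is a weak equivalence iff it becomes invertible in the homotopy category, and the fibrant-replacement map $NG\to RNG$ is a natural weak equivalence. So it suffices to show that $[NF]$ and $[NU]$ are mutually inverse in $\mathrm{Ho}$.

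Consider first the unit $\eta\cn 1_\cD\Rightarrow UF$, whose components lie in $\cV$. Let $\cJ$ be the relative category with underlying category $\{0\to 1\}$ and every morphism a weak equivalence. The natural transformation $\eta$ is precisely a relative functor $H\cn(\cD,\cV)\times\cJ\to(\cD,\cV)$ that restricts along the two inclusions $\iota_0,\iota_1\cn(\cD,\cV)\to(\cD,\cV)\times\cJ$ to $1_\cD$ and $UF$; relativity of $H$ uses that a morphism $(v,0\to 1)$ with $v\in\cV$ factors as a composite of two morphisms whose $H$-images lie in $\cV$. Since $N$ preserves finite products, $N\cJ$ is levelwise the nerve of the contractible poset $\mathrm{Fun}([n],\{0\to 1\})$, and the complete Segal space model structure is cartesian, the projection $(\cD,\cV)\times\cJ\to(\cD,\cV)$ is an equivalence of homotopy theories. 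As $\iota_0$ and $\iota_1$ are both sections of this projection, $[N\iota_0]=[N\iota_1]$ in $\mathrm{Ho}$, and precomposing with $[NH]$ gives $[N 1_\cD]=[N(UF)]$, that is $\mathrm{id}=[NU]\,[NF]$. Running the same argument with the counit $\eps\cn FU\Rightarrow 1_\cC$ (components in $\cW$) and the relative category $(\cC,\cW)$ gives $\mathrm{id}=[NF]\,[NU]$.

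Therefore $[NF]$ and $[NU]$ are mutually inverse isomorphisms in $\mathrm{Ho}$, so each of $F$ and $U$ is an equivalence of homotopy theories. The genuinely load-bearing inputs are the ones recalled in the second paragraph: that the classification diagram carries products to products and the interval $\cJ$ to a contractible simplicial space, and that the complete Segal space model structure is cartesian. Together these force the two cylinder inclusions $\iota_0,\iota_1$ to become equal in $\mathrm{Ho}$, which is the crux; everything else, in particular checking that $\eta$ and $\eps$ assemble into relative functors on the cylinder, is bookkeeping. The one place where care is needed is pinning down which model of the classification diagram makes product-preservation strict and citing cartesianness of the complete Segal space structure precisely. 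An alternative argument avoids complete Segal spaces altogether by passing to Dwyer--Kan hammock localizations, where a relative adjunction induces a simplicial adjunction between $L^H\cC$ and $L^H\cD$ whose unit and counit are levelwise weak equivalences and hence a DK-equivalence; there the corresponding substantive point is Dwyer and Kan's computation that $L^H$ sends a natural weak equivalence to a simplicial homotopy, which plays the role of the cylinder argument above.
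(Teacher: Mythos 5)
Your proposal is correct, and it supplies an actual argument where the paper gives none: the paper only records this lemma as an observation and refers to \cite[2.9]{GJO2015KTheory} for discussion. Your route is the standard one and it works: encode the unit as a relative functor out of the cylinder $(\cD,\cV)\times\mathcal{J}$ on the walking weak equivalence, note that the classification diagram preserves products and sends $\mathcal{J}$ to a levelwise contractible simplicial space, conclude that the two cylinder inclusions agree in the homotopy category, and deduce that $[NU][NF]=\mathrm{id}$ and, by the counit, $[NF][NU]=\mathrm{id}$; the translation between ``weak equivalence after fibrant replacement'' and ``isomorphism in $\mathrm{Ho}$'' is legitimate because every simplicial space is Reedy cofibrant. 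Two small points: you do not need cartesianness of the complete Segal space model structure, only the (simpler) fact that levelwise weak equivalences are weak equivalences there, since it is a left Bousfield localization of the Reedy structure and $X\times N\mathcal{J}\to X$ is already a levelwise equivalence; and in your hammock-localization aside the phrase ``simplicial adjunction'' is looser than what Dwyer--Kan give you---the correct load-bearing fact is exactly the one you name, that a natural weak equivalence between relative functors induces a simplicial homotopy after $L^H$, which makes $L^H F$ and $L^H U$ mutually inverse up to homotopy and hence DK-equivalences. With those caveats, your write-up is a complete and essentially self-contained proof of the lemma.
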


This lemma motivates the following definition.
\begin{defn}
  We say that a relative adjunction is an \emph{adjoint equivalence of
    homotopy theories} if the components of its unit and counit are
  weak equivalences.
\end{defn}

We are interested in the interplay between different types of
morphisms between given objects, and thus make the following
definition.

\begin{defn}
  A \emph{map extension} of a category $\cC_\tau$ is an inclusion
  $i\cn\cC_\tau \monoto \cC_\la$ which is the identity on objects.  We
  refer to the morphisms of $\cC_\tau$ as \emph{tight}, and those of
  $\cC_\la$ as \emph{loose}.
\end{defn}

For example, one might take the tight maps between monoidal categories
to be the strict monoidal functors and the loose maps to be the lax
monoidal functors, the oplax monoidal functors, the strong monoidal
functors, etc.  A map extension is a special case of what
\cite{LS2012Enhanced} call an $\sF$-category.

\begin{defn}
  Let
  \[
  \begin{xy}
    0;<15mm,0mm>:<0mm,10mm>:: 
    (-1,0)*+{(\cC_\tau, \cW_\tau)}="ct";
    (1,0)*+{(\cC_\la, \cW_\la)}="cl";
    {\ar@/_3mm/_-{i} "ct"; "cl"};
    {\ar@/_3mm/_-{Q} "cl"; "ct"};
    (0,0)*+{\bot}="v";
  \end{xy}
  \]
  be a relative adjunction.  We say \emph{$Q\dashv i$ creates weak
    equivalences} if both $i$ and $Q$ create weak equivalences.
\end{defn}
It is generally not the case that a relative adjunction creates a
class of weak equivalences in this sense.  We will, however, describe
useful hypotheses which guarantee this in a number of interesting
examples.

\begin{thm}\label{thm:equiv-conditions-left-merged}
  Let $(\cC_\tau, \cW_\tau)$, $(\cC_\la, \cW_\la)$ be categories with weak
  equivalences and let 
  \[
  \cC_\tau \fto{i} \cC_\la
  \]
  be a map extension.  Assume there is a left adjoint $Q \dashv i$
  with counit $\epz$ and unit $\eta$.  Then the following are
  equivalent:
  \begin{samepage}
  \begin{enumerate}
  \item\label{ecl-i-epz} $i$ creates weak equivalences and $\epz \in \cW_\tau$, 
  \item\label{ecl-i-eta} $i$ creates weak equivalences and $\eta \in \cW_\la$,
  \item\label{ecl-Q-epz} $Q$ creates weak equivalences and $\epz \in \cW_\tau$.
  \end{enumerate}
  Moreover, these conditions imply the following:
  \begin{enumerate}
    \setcounter{enumi}{3}
  \item\label{ecl-Q-eta} $Q$ creates weak equivalences and $\eta \in \cW_\la$.
  \end{enumerate}
  Consequently, 
  \[
  \begin{xy}
    0;<15mm,0mm>:<0mm,15mm>:: 
    (-1,0)*+{(\cC_\tau, \cW_\tau)}="ct";
    (1,0)*+{(\cC_\la, \cW_\la)}="cl";
    {\ar@/_3mm/_-{i} "ct"; "cl"};
    {\ar@/_3mm/_-{Q} "cl"; "ct"};
    (0,0)*+{\bot}="v";
  \end{xy}
  \]
  is an adjoint equivalence of homotopy
  theories.
  \end{samepage}
  \end{thm}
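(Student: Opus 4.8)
The plan is to prove the cycle of equivalences among \eqref{ecl-i-epz}, \eqref{ecl-i-eta}, \eqref{ecl-Q-epz}, establish that these imply \eqref{ecl-Q-eta}, and then invoke \cref{lem:ajd-equiv-is-equiv} for the final assertion. The key technical leverage comes from the triangle identities for the adjunction $Q \dashv i$, together with the fact that $i$ is the identity on objects. Since $i$ is bijective on objects, the counit $\epz_C \cn Q i C \to C$ is a morphism in $\cC_\tau$ for every object $C$, and the unit $\eta_D \cn D \to i Q D$ is a morphism in $\cC_\la$; moreover, because $i$ is the identity on objects, the composite $iQiC$ agrees with $i$ applied to $QiC$ and so on, which lets us pass freely between the two categories when comparing components.

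First I would establish \eqref{ecl-i-epz} $\Leftrightarrow$ \eqref{ecl-i-eta}, assuming throughout that $i$ creates weak equivalences. The triangle identity states that $\epz_{iC} \circ Q\eta_{?}$-style composites are identities; concretely, $i\epz_{D} \circ \eta_{iD} = \id_{iD}$ in $\cC_\la$ (after identifying objects). Thus for any object, $i\epz$ is a one-sided inverse to a component of $\eta$ up to the other triangle identity; applying the 2-out-of-3 property in $(\cC_\la, \cW_\la)$, one component is a weak equivalence if and only if the other is. Since $i$ creates weak equivalences, $\epz \in \cW_\tau$ iff $i\epz \in \cW_\la$, and this is then equivalent to $\eta \in \cW_\la$. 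The equivalence \eqref{ecl-i-epz} $\Leftrightarrow$ \eqref{ecl-Q-epz} requires showing that, in the presence of $\epz \in \cW_\tau$, the functor $i$ creates weak equivalences if and only if $Q$ does. One direction: if $i$ creates weak equivalences, then for a map $f$ in $\cC_\la$ we have $Qf \in \cW_\tau$ iff $iQf \in \cW_\la$; now the naturality square for $\eta$ relates $iQf$ to $f$ via the components $\eta$, which are weak equivalences (by the already-proven equivalence with \eqref{ecl-i-eta}), so 2-out-of-3 in $\cC_\la$ gives $iQf \in \cW_\la$ iff $f \in \cW_\la$. The converse direction is analogous, using that $i$ is faithful and that $\epz$ witnesses a retraction at the level of objects. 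Finally, \eqref{ecl-Q-epz} $\Rightarrow$ \eqref{ecl-Q-eta} follows by the same triangle-identity-plus-2-out-of-3 argument used in the first step, now carried out in $\cC_\tau$ and transported across $Q$.

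The main obstacle I anticipate is bookkeeping the identifications forced by "$i$ is the identity on objects" carefully enough that the naturality squares and triangle identities can genuinely be read in both $\cC_\tau$ and $\cC_\la$ — in particular, checking that $Q$ creating weak equivalences interacts correctly with the fact that $Q$ need not be the identity on objects, so that a naturality square for $\eta$ at a morphism $f\cn D \to D'$ has the shape $\eta_{D'} \circ f = iQf \circ \eta_D$ with all four edges living in $\cC_\la$. Once the equivalences \eqref{ecl-i-epz}--\eqref{ecl-Q-eta} are in hand, the "Consequently" clause is immediate: under \eqref{ecl-i-eta} we have $\eta \in \cW_\la$ and under \eqref{ecl-i-epz} we have $\epz \in \cW_\tau$, so the relative adjunction $Q \dashv i$ has unit and counit that are weak equivalences, and \cref{lem:ajd-equiv-is-equiv} yields that it is an adjoint equivalence of homotopy theories.
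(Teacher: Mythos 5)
Your proposal is correct and follows essentially the same route as the paper: triangle identities plus 2-out-of-3 (and the fact that $i$ is the identity on objects) give the equivalence \eqref{ecl-i-epz}$\Leftrightarrow$\eqref{ecl-i-eta} and the implication to \eqref{ecl-Q-eta}, naturality squares for $\eta$ (and, in the ``analogous'' converse, for $\epz$) transfer the creation of weak equivalences between $i$ and $Q$, and \cref{lem:ajd-equiv-is-equiv} gives the final clause. Aside from minor imprecision in notation (e.g.\ the indexing of the first triangle identity and the unnecessary appeal to faithfulness of $i$), this matches the paper's argument.
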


\begin{proof}
  With $Q \dashv i$, we have the following triangle identities for $A
  \in \cC_\tau$ and $B \in \cC_\la$.
  \[
  \begin{xy}
    0;<19mm,0mm>:<0mm,14mm>:: 
    (0,0)*+{QB}="A";
    (1,0)*+{QiQB}="B";
    (1,-1)*+{QB}="C";
    {\ar^{Q\eta_B} "A"; "B"};
    {\ar^{\epz_{QB}} "B"; "C"};
    {\ar@{=} "A"; "C"};
  \end{xy}
  \hspace{1in}
  \begin{xy}
    0;<19mm,0mm>:<0mm,14mm>:: 
    (0,0)*+{iA}="A";
    (1,0)*+{iQiA}="B";
    (1,-1)*+{iA}="C";
    {\ar^{\eta_{iA}} "A"; "B"};
    {\ar^{i\epz_{A}} "B"; "C"};
    {\ar@{=} "A"; "C"};
  \end{xy}
  \]

  We first show \eqref{ecl-i-epz} $\Leftrightarrow$ \eqref{ecl-i-eta}
  and \eqref{ecl-Q-epz} $\Rightarrow$ \eqref{ecl-Q-eta}: If $i$
  creates weak equivalences and $\epz_A \in \cW_\tau$, then $i \epz_A \in
  \cW_\la$ and therefore $\eta_{iA} \in \cW_\la$ by the 2-out-of-3
  property.  But since $i$ is the identity on objects, we have that
  $\eta \in \cW_\la$.  Conversely, $\eta \in \cW_\la$ implies each $i
  \epz_A \in \cW_\la$ and therefore $\epz \in \cW_\tau$ since $i$ creates
  weak equivalences.  Likewise, if $Q$ creates weak equivalences and
  $\epz \in \cW_\tau$, then $\eta \in \cW_\la$.

  Now we show \eqref{ecl-i-epz} and \eqref{ecl-i-eta} together imply
  \eqref{ecl-Q-epz}. To do so, we need only show that $Q$ creates weak
  equivalences. Let $f\cn A \to B$ in $\cC_\la$.  The naturality
  square for $\eta$ at $f$ together with the 2-out-of-3 property imply
  that $f \in \cW_\la$ if and only if $iQf \in \cW_\la$.  Therefore, since
  $i$ creates weak equivalences, so does $Q$.

  A similar argument using naturality of $\epz$ shows
  \eqref{ecl-Q-epz} and \eqref{ecl-Q-eta} together imply
  \eqref{ecl-i-epz}.
\end{proof}

\begin{note}
  We emphasize that condition \eqref{ecl-Q-eta} does not generally
  imply the others.
\end{note}

\begin{rmk}
  In practice, we have a notion of weak equivalences in $\cC_\tau$ and
  want to extend this notion to the more general maps in $\cC_\la$ in
  a conservative way: we do not want a tight map to become a weak
  equivalence when considered as a loose map. The fact that conditions
  \eqref{ecl-i-epz} and \eqref{ecl-Q-epz} in
  \cref{thm:equiv-conditions-left-merged} are equivalent means that whenever
  $\epz$ is a weak equivalence we can achieve this by creating
  $\cW_\la$ via $Q$.  
 \end{rmk}

The same reasoning above, applied to different triangle identities,
yields the following version of \cref{thm:equiv-conditions-left-merged} when
$i$ has a right adjoint.  We will not use this version, but include it
for completeness.

\begin{thm}\label{thm:equiv-conditions-right}
  Let $(\cC_\tau, \cW_\tau)$, $(\cC_\la, \cW_\la)$ be categories with weak
  equivalences and let $\cC_\tau \fto{i} \cC_\la$ be a map extension.  Assume
  there is a right adjoint $i \dashv Q$ with counit and unit $\epz$ and
  $\eta$, respectively.  The following are equivalent:
  \begin{samepage}
  \begin{enumerate}
  \item\label{ecr-i-eta} $i$ creates weak equivalences and $\eta \in \cW_\tau$,
  \item\label{ecr-i-epz} $i$ creates weak equivalences and $\epz \in \cW_\la$,
  \item\label{ecr-Q-eta} $Q$ creates weak equivalences and $\eta \in \cW_\tau$.
  \end{enumerate}  
  Moreover, these conditions imply the following:
  \begin{enumerate}
    \setcounter{enumi}{3}
  \item\label{ecr-Q-epz} $Q$ creates weak equivalences and $\epz \in \cW_\la$.
  \end{enumerate}
    Consequently, 
  \[
  \begin{xy}
    0;<15mm,0mm>:<0mm,15mm>:: 
    (-1,0)*+{(\cC_\tau, \cW_\tau)}="ct";
    (1,0)*+{(\cC_\la, \cW_\la)}="cl";
    {\ar@/_3mm/_-{i} "ct"; "cl"};
    {\ar@/_3mm/_-{Q} "cl"; "ct"};
    (0,0)*+{\top}="v";
  \end{xy}
  \]
  is an adjoint equivalence of homotopy
  theories.
  \end{samepage}
\end{thm}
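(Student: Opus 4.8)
The plan is to reproduce the proof of \cref{thm:equiv-conditions-left-merged}, substituting the triangle identities for $i \dashv Q$ for those used there. With unit $\eta$ and counit $\epz$, these read
\[
  \epz_{iA} \circ i\eta_A = 1_{iA} \quad (A \in \cC_\tau),
  \qquad
  Q\epz_B \circ \eta_{QB} = 1_{QB} \quad (B \in \cC_\la).
\]
Since $\cW_\tau$ and $\cW_\la$ contain all identities and satisfy 2-out-of-3, these identities alone yield \eqref{ecr-i-eta} $\Leftrightarrow$ \eqref{ecr-i-epz} and \eqref{ecr-Q-eta} $\Rightarrow$ \eqref{ecr-Q-epz}. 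Indeed, if $i$ creates weak equivalences and $\eta \in \cW_\tau$, then $i\eta_A \in \cW_\la$, so the first triangle identity and 2-out-of-3 force $\epz_{iA} \in \cW_\la$; since $i$ is the identity on objects, every object of $\cC_\la$ is some $iA$, hence $\epz \in \cW_\la$. Running this in reverse---using that $i$ creates weak equivalences to descend from $i\eta_A \in \cW_\la$ to $\eta_A \in \cW_\tau$---gives the converse. For \eqref{ecr-Q-eta} $\Rightarrow$ \eqref{ecr-Q-epz}, the second triangle identity together with $\eta_{QB} \in \cW_\tau$ gives $Q\epz_B \in \cW_\tau$, whence $\epz_B \in \cW_\la$ because $Q$ creates weak equivalences.

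Next I would connect the ``$i$'' and ``$Q$'' conditions by naturality, exactly as in the earlier proof. Assuming \eqref{ecr-i-eta} and \eqref{ecr-i-epz}, it remains only to check that $Q$ creates weak equivalences: for $f \cn A \to B$ in $\cC_\la$, the naturality square $f \circ \epz_A = \epz_B \circ iQf$ for $\epz$, together with $\epz_A, \epz_B \in \cW_\la$ and 2-out-of-3, shows $f \in \cW_\la$ if and only if $iQf \in \cW_\la$, which in turn holds if and only if $Qf \in \cW_\tau$ since $i$ creates weak equivalences. Symmetrically, assuming \eqref{ecr-Q-eta} and \eqref{ecr-Q-epz}, the naturality square $\eta_B \circ g = Qig \circ \eta_A$ for a morphism $g$ of $\cC_\tau$, together with $\eta_A, \eta_B \in \cW_\tau$ and the fact that $Q$ creates weak equivalences, shows $g \in \cW_\tau$ if and only if $ig \in \cW_\la$, i.e.\ $i$ creates weak equivalences. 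This closes the cycle among \eqref{ecr-i-eta}, \eqref{ecr-i-epz}, \eqref{ecr-Q-eta}, with \eqref{ecr-Q-epz} implied throughout.

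For the final clause, any of the equivalent conditions supplies $\eta \in \cW_\tau$ and $\epz \in \cW_\la$, so \cref{lem:ajd-equiv-is-equiv} gives that $i \dashv Q$ is an adjoint equivalence of homotopy theories. I expect no genuine obstacle: the argument is entirely formal. The one point needing care---already present in the proof of \cref{thm:equiv-conditions-left-merged}---is the step that promotes a weak-equivalence statement about the components $\epz_{iA}$ to one about every component of $\epz$, which is exactly where the hypothesis that $i$ is the identity on objects is used.
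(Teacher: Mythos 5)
Your proposal is correct and follows exactly the route the paper intends: the paper gives no separate proof of this theorem, saying only that ``the same reasoning'' as in \cref{thm:equiv-conditions-left-merged} applies with the triangle identities for $i \dashv Q$, and your argument is a faithful instantiation of that, with the correct triangle identities, naturality squares, and the same use of $i$ being the identity on objects to pass from the components $\epz_{iA}$ to all of $\epz$. The only cosmetic point is that the final clause is really immediate from the definition of adjoint equivalence of homotopy theories (both functors create, hence preserve, weak equivalences, and the unit and counit are weak equivalences), with \cref{lem:ajd-equiv-is-equiv} then upgrading this to an equivalence of homotopy theories.
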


The next result shows that if weak equivalences in the category of
tight maps are detected via some underlying data, then the same is
true for the loose maps.  This is the most common situation in
examples of interest.

\begin{thm}\label{thm:u-creates}
  Assume the hypotheses and any of the equivalent statements of
  \cref{thm:equiv-conditions-left-merged}.  Furthermore, let $(\cK,\cV)$ be a
  category with weak equivalences with a commutative triangle of
  underlying categories as below.
  \[\begin{xy}
    0;<15mm,0mm>:<0mm,10mm>:: 
    (-1,0)*+{\cC_\tau}="c";
    (1,0)*+{\cC_\la}="cl";
    (0,-1)*+{\cK}="k";
    {\ar^-{i} "c"; "cl"};
    {\ar_-{U_\tau} "c"; "k"};
    {\ar^-{U_\la} "cl"; "k"};
  \end{xy}\]
  Then $U_\tau$ creates $\cW_\tau$ if and only if $U_\la$ creates $\cW_\la$.
\end{thm}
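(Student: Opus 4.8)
The plan is to prove the two implications of the biconditional separately, in both cases leaning on the commutativity $U_\tau = U_\la i$ and on the consequences of \cref{thm:equiv-conditions-left-merged}: under any of its equivalent hypotheses, $i$ and $Q$ both create---hence preserve---weak equivalences, and $\epz$ and $\eta$ are both weak equivalences. The implication ``$U_\la$ creates $\cW_\la$ implies $U_\tau$ creates $\cW_\tau$'' is then immediate, since for a morphism $f$ of $\cC_\tau$ one has $f \in \cW_\tau \Leftrightarrow if \in \cW_\la \Leftrightarrow U_\la(if) \in \cV \Leftrightarrow U_\tau f \in \cV$, using in turn that $i$ creates weak equivalences, that $U_\la$ creates $\cW_\la$, and that $U_\tau = U_\la i$.

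The substantive work is the converse, so assume $U_\tau$ creates $\cW_\tau$. First I would establish the key fact that $U_\la$ sends the unit $\eta$ into $\cV$, that is, $U_\la \eta_B \in \cV$ for every object $B$ of $\cC_\la$. Here one exploits that $i$ is the identity on objects: $B$ equals $iA$ for a unique object $A$ of $\cC_\tau$, and applying $U_\la$ to the triangle identity $i\epz_A \circ \eta_{iA} = \mathrm{id}_{iA}$, together with $U_\la i = U_\tau$, exhibits $U_\tau \epz_A \circ U_\la \eta_B$ as an identity morphism of $\cK$. Since $\epz_A \in \cW_\tau$ and $U_\tau$ preserves weak equivalences, $U_\tau \epz_A \in \cV$, and the 2-out-of-3 property in $(\cK, \cV)$ then forces $U_\la \eta_B \in \cV$.

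With this in hand, let $f \cn A \to B$ be any morphism of $\cC_\la$. Applying $U_\la$ to the naturality square of $\eta$ at $f$ gives a commuting square in $\cK$ two of whose edges, $U_\la \eta_A$ and $U_\la \eta_B$, lie in $\cV$ by the previous step, and whose remaining edges are $U_\la f$ and $U_\la(iQf) = U_\tau Q f$; by 2-out-of-3 this yields $U_\la f \in \cV \Leftrightarrow U_\tau Q f \in \cV$. On the other side, arguing exactly as in the proof of \cref{thm:equiv-conditions-left-merged}, naturality of $\eta$ with 2-out-of-3 gives $f \in \cW_\la \Leftrightarrow iQf \in \cW_\la$, which is equivalent to $Qf \in \cW_\tau$ since $i$ creates weak equivalences, and hence to $U_\tau Q f \in \cV$ since $U_\tau$ creates $\cW_\tau$. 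Concatenating the equivalences gives $f \in \cW_\la \Leftrightarrow U_\la f \in \cV$, so $U_\la$ creates $\cW_\la$.

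The step I expect to be the main obstacle is precisely this converse direction: a general loose map $f$ need not factor through $i$, so one cannot compare $U_\la f$ with $\cV$ directly through $U_\tau$ the way one does in the easy direction. The way around it is to replace $f$ by its reflection $Qf$, which is a tight map and so can be tested by $U_\tau$, and to correct for the substitution along the unit $\eta$; this correction is harmless exactly because $\eta$ is a weak equivalence and, by the computation in the second paragraph, $U_\la$ carries $\eta$ into $\cV$. Everything else is a diagram chase with the 2-out-of-3 property.
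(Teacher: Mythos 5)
Your proof is correct and follows essentially the same route as the paper: the easy direction via $U_\tau = U_\la i$, then for the converse showing $U_\la\eta \in \cV$ from the triangle identity and 2-out-of-3, and concluding via the $U_\la$-image of the naturality square for $\eta$ together with the fact that $Q$ creates weak equivalences. The only cosmetic difference is that you re-derive ``$Q$ creates weak equivalences'' inside the argument, whereas the paper simply cites it as part of \cref{thm:equiv-conditions-left-merged}.
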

\begin{proof}
  One implication is obvious: if $U_\la$ creates weak equivalences
  then so does $U_\tau$.  Now for the converse assume that $U_\tau$
  creates weak equivalences.  We first show that $U_\la \eta \in \cV$.  
  Applying $U_\la$ to one of the triangle identities shows that
  \[
  \id_{\,U_\tau A} = U_\la i  \epz_{A} \circ U_\la \eta_{iA} = U_\tau \epz_{A} \circ U_\la \eta_{iA},
  \]
  so 2-out-of-3, the fact that $i$ is the identity on objects, and the
  assumption that $U_\tau$ creates weak equivalences shows $U_\la \eta
  \in \cV$.

  Now let $f$ be a morphism of $\cC_\la$.  Naturality of $\eta$ shows
  that $\eta \circ f = iQf \circ \eta$.  Applying $U_\la$ to this
  equation gives
  \[
  U_\la \eta \circ U_\la f = U_\la iQf \circ U_\la \eta = U_\tau Qf \circ
  U_\la \eta.
  \]
  By \cref{thm:equiv-conditions-left-merged}, $Q$ creates weak
  equivalences.  So $f \in \cW_\la$ if and only if $U_\tau Qf \in \cV$
  and, so the result follows by 2-out-of-3 and $U_\la \eta \in \cV$.
\end{proof}

\begin{defn}\label{defn:adj-equiv-hty-thy-over-K-V}
  We say that $Q \dashv i$ is an \emph{adjoint equivalence of homotopy
    theories over $(\cK, \cV)$} and write
  \[
  \begin{xy}
    0;<25mm,0mm>:<0mm,15mm>:: 
    (-1,0)*+{(\cC_\tau, \cW_\tau)}="ts";
    (1,0)*+{(\cC_\la, \cW_\la)}="tl";
    (0,-1)*+{(\cK, \cV)}="k";
    {\ar@/_3mm/_-{i} "ts"; "tl"};
    {\ar@/_3mm/_-{Q} "tl"; "ts"};
    {\ar_-{U_\tau} "ts"; "k"};
    {\ar^-{U_\la} "tl"; "k"};
    (0,0)*+{\bot}="v";
  \end{xy}
  \]
  to mean:
  \begin{enumerate}
  \item $Q \dashv i$ is an adjoint equivalence of homotopy theories,
  \item $U_\la \circ i = U_\tau$, and
  \item both $U_\la$ and $U_\tau$ create weak equivalences.  
  \end{enumerate}
  Note in particular that the triangle involving $Q$ does not
  generally commute.
\end{defn}

\section{Applications to algebras over 2-monads}
\label{sec:applications}

We will apply the results on homotopy theories in the previous section
to various categories of algebras over 2-monads.  We assume the reader
is familiar with basic 2-monad theory as developed in, e.g.,
\cite{KS74Review,BKP1989Two}.

Throughout this section we let $\cK$ be a complete and cocomplete
2-category (in the $\Cat$-enriched sense), and let $T\cn\cK \to \cK$
be a 2-monad.  Let $T\mh\Alg_s$ denote the 2-category whose 0-cells
are $T$-algebras, 1-cells are strict algebra maps, and 2-cells are
$T$-algebra transformations.  There are also notions of lax, oplax and
pseudo algebra maps, which are, respectively, the 1-cells in the
2-categories $T\mh\Alg_l$, $T\mh\Alg_{op}$, and $T\mh\Alg_{ps}$.  

In examples, $T$ might describe (symmetric)
monoidal structures, $n$-fold monoidal structures, 
diagrams in a 2-category, or $G$-equivariant structures for a group
$G$.  In the monoidal case, the four kinds of maps are:
\begin{itemize}
\item strict monoidal, with axioms like $F(x) \otimes F(y) = F(x \otimes y)$;
\item lax monoidal, with additional data like $F(x) \otimes F(y) \to
  F(x \otimes y)$, subject to new coherence axioms; 
\item oplax monoidal, with additional data like $F(x \otimes y) \to
  F(x) \otimes F(y)$, subject to the ``backwards'' version of the lax
  axioms; and
  \item strong monoidal (pseudo algebra maps), with additional data like $F(x) \otimes F(y)
  \cong F(x \otimes y)$, once again subject to new coherence axioms.
\end{itemize}

\begin{defn}[2-monadic]
  A 2-functor is called \emph{2-monadic} if it is monadic in the
  $\Cat$-enriched sense.
\end{defn}

Let $U_\om \cn T\mh\Alg_\om \to \cK$ denote any of the functors which give the
underlying objects and morphisms, where $\om$ denotes any of $s$, $l$, $op$, or $ps$.  The functor $U_s$ is then
2-monadic, and any 2-monadic functor is of this form (up to
2-equivalence of 2-categories); in particular, one should note that
2-monadicity does not capture the structure of any of the non-strict variants.

The $\Cat$-enriched monadicity theorem \cite{Dub1970Kan}
gives three essential conditions which
imply that a 2-functor $U \cn X \to Y$ is 2-monadic.
First, it must have a
left 2-adjoint.  Second, it must be conservative (see below).  Third,
$X$ must have, and $U$ must preserve, certain coequalizers. 

\begin{defn}[Conservative]
  A functor is called \emph{conservative} if it reflects isomorphisms.
\end{defn}

\begin{defn}[Accessible]

  A functor is \emph{accessible} if it preserves $\ka$-filtered
  colimits for some regular cardinal $\ka$.  A monad is called
  accessible if its underlying functor is accessible.
\end{defn}

\begin{thm}[\cite{BKP1989Two}]\label{prop:bkp-access}
  If $T$ is an accessible 2-monad on a complete and cocomplete
  2-category $\cK$, then the inclusion
  \[
  i\cn T\mh\Alg_s \hookrightarrow T\mh\Alg_l
  \]
  has a left 2-adjoint $Q$.
\end{thm}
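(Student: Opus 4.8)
The plan is to construct the left 2-adjoint $Q$ objectwise, by producing for each $T$-algebra $(A,a)$ its \emph{lax morphism classifier}: a $T$-algebra $QA$ together with a lax morphism $A \to i(QA)$ which is universal, in the sense that composing with it gives an isomorphism of categories $T\mh\Alg_s(QA, B) \cong T\mh\Alg_l(A, iB)$ that is 2-natural in $B$, and (once $Q$ is seen to be functorial) in $A$ as well. Such a family of isomorphisms is exactly the data of a 2-adjunction $Q \dashv i$, and since $QA$ is built from $A$ without disturbing underlying objects, $i$ remains the identity on objects.

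First I would use that $U_s \cn T\mh\Alg_s \to \cK$ is 2-monadic, so it has a left 2-adjoint $F$ with $FX = (TX, \mu_X)$ and a $\Cat$-natural isomorphism $T\mh\Alg_s(FX, B) \cong \cK(X, U_s B)$; thus a strict map $FA \to B$ is the same thing as the underlying $\cK$-morphism of a prospective lax morphism $A \to B$, and this matching extends to 2-cells. Next I would invoke the standard fact that $T\mh\Alg_s$ admits coinserters and coequifiers --- indeed it is cocomplete --- because $\cK$ is complete and cocomplete and $T$ is accessible: its colimits are produced by the usual transfinite construction of colimits in a category of algebras, which converges precisely because $T$ has a rank. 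I would then assemble $QA$ as a 2-categorical colimit of a small diagram of free algebras. Concretely: form the coinserter $C$ of the pair of strict algebra maps $\mu_A, \, Ta \cn FTA \to FA$ (these are strict by the associativity law of $T$ and naturality of $\mu$); unwinding the universal property, a strict map $C \to B$ is precisely a $\cK$-morphism $f \cn A \to B$ equipped with a 2-cell $\bar f \cn b\cdot Tf \Rightarrow f\cdot a$ of the shape occurring in a lax morphism (here $a$, $b$ denote the algebra structures on $A$, $B$). Then impose the two lax-morphism coherence axioms by a pair of coequifiers, each forcing an equality of the relevant pasted 2-cells. Reading off the resulting universal property gives that a strict map $QA \to B$ is exactly a lax morphism $A \to iB$; because every constituent colimit is a weighted 2-colimit this bijection is $\Cat$-natural in $B$, and it is natural in $A$ since $FTA$, $\mu_A$, $Ta$, and the imposed axioms are all functorial in $A$.

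I expect the principal obstacle to be the bookkeeping that identifies this colimit correctly: one must verify that the two maps $FTA \to FA$ transpose, under $F \dashv U_s$, to $b\cdot Tf$ and $f\cdot a$ respectively (this uses the monad laws and naturality of the unit of $T$), and that forcing the two coequifier equations reproduces precisely the unit and multiplication axioms in the definition of a lax $T$-algebra morphism. The accessibility hypothesis, by contrast, enters at exactly one point, but there it is essential: it is what guarantees that the coinserter and coequifiers used to build $QA$ actually exist in $T\mh\Alg_s$, which is not automatic since $U_s$ does not create colimits. One could alternatively try to obtain $Q$ from a $\Cat$-enriched adjoint functor theorem, but that route wants $T\mh\Alg_s$ to be locally presentable and so asks more of $\cK$ than mere completeness and cocompleteness; the explicit classifier sidesteps this. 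Finally, the algebra $QA$ produced here is the object underlying the factorization-system description of lax maps due to Bourke and Garner \cite{BG2014AWFSII} that we exploit in \cref{sec:applications}.
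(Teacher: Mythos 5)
Your construction is correct and is essentially the argument of Blackwell--Kelly--Power that we cite for this result: the lax morphism classifier $QA$ built as a coinserter of $\mu_A, Ta\cn FTA \to FA$ followed by two coequifiers imposing the unit and multiplication coherence axioms, with accessibility of $T$ used exactly to guarantee that $T\mh\Alg_s$ has these colimits. Since the paper gives no independent proof beyond the citation, there is nothing further to compare.
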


\begin{rmk}
 The above result holds when lax is replaced with oplax or pseudo.
\end{rmk}

Bourke and Garner show in \cite{BG2014AWFSII} that $Q$ arises from an
algebraic weak factorization system using the class of lalis.  Here we
are required to use the additional power of an \emph{algebraic} weak
factorization system over the more traditional weak factorization
systems.  Algebraic refers to additional structure we require our
factorization system to possess.  Instead of having left and right
classes of maps satisfying factorization and lifting axioms, we have a
functorial factorization $f \mapsto Rf \circ Lf$ equipped with the
structure of a monad on the functor $R$ and a comonad on the functor
$L$.  The coalgebras for $L$ play the role of left maps, and the
algebras for $R$ play the role of right maps.  A (co)algebra structure
is just that: additional structure.  Thus we talk about right map
structures on a given morphism, meaning a choice of algebra structure
for the monad $R$.  While the proofs of the results quoted here depend
heavily on this extra algebraic structure, the theory of algebraic
weak factorization systems can be taken as a black box for our
purposes.  For further reading, see
\cite{GT2006Natural,BG2014AWFSI,BG2014AWFSII}.

\begin{defn}\label{defn:lali}
  A \emph{left-adjoint left-inverse}, or \emph{lali}, in $\cK$ is an
  adjunction $(f \dashv g, \epz\cn fg \Rightarrow \id, \eta\cn \id
  \Rightarrow gf)$ such that $\epz$ is the identity.
\end{defn}

\begin{prop}[\cite{BG2014AWFSII}]\label{prop:bourke-garner}
  Let $T$ be an accessible 2-monad on a complete and cocomplete 2-category $\cK$. 
  \begin{enumerate}  \item\label{it:bg-1} There is an algebraic weak factorization system
    on the underlying category of $\cK$ such that a right map
    structure on a map $f$ is a lali structure $(f \dashv g, \epz = \id, \eta)$.
  \item\label{it:bg-2} There is an algebraic weak factorization system
    on the underlying category of $T\mh\Alg_s$ such that a right map
    structure on a strict algebra map $f\cn A \to B$ in $T\mh\Alg_s$
    is a lali structure on the underlying 1-cell in $\cK$.
  \item\label{it:bg-3} The inclusion $i$ has a left adjoint $Q$
    \[\begin{xy}
      0;<25mm,0mm>:<0mm,15mm>:: 
      (0,0)*+{T\mh\Alg_s}="ts";
      (1,0)*+{T\mh\Alg_l}="tl";
      {\ar@/_3mm/_-{i} "ts"; "tl"};
      {\ar@/_3mm/_-{Q} "tl"; "ts"};
      (.5,0)*+{\bot}="v";
    \end{xy}\]
    and the counit $\epz$ of this adjunction has a right map structure as in \eqref{it:bg-2}.
  \end{enumerate}
\end{prop}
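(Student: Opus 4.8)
The plan is to cite \cite{BG2014AWFSII} for the hard algebraic input and then assemble the three parts. The underlying principle is the Bourke--Garner observation that a 2-monad $T$ on $\cK$ determines, via its codescent-object-style coherence data, an algebraic weak factorization system whose right maps are precisely the lalis. Part \eqref{it:bg-1} is exactly their theorem: one takes the AWFS generated on the arrow category of the underlying category of $\cK$ by the ``generic lali'' data, so that an $R$-algebra structure on $f \cn X \to Y$ unpacks to a choice of right adjoint $g$ with $fg = \id_Y$ and unit $\eta \cn \id_X \Rightarrow gf$ satisfying the triangle identities; I would note that accessibility of $T$ (hence of the relevant pointed endofunctor) is what lets the small-object argument / algebraic cofibrant replacement converge on the complete cocomplete $\cK$, so the AWFS genuinely exists.

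For part \eqref{it:bg-2} I would lift the AWFS of \eqref{it:bg-1} along the 2-monadic functor $U_s \cn T\mh\Alg_s \to \cK$. Since $U_s$ creates the limits and colimits needed and $T\mh\Alg_s$ inherits completeness and cocompleteness, the AWFS on $\cK$ lifts to one on the underlying ordinary category of $T\mh\Alg_s$; this is the standard ``lifting AWFS along an adjunction/monadic functor'' result, again from \cite{BG2014AWFSII}. The content to extract is the description of right map structures: a right map structure on a strict algebra morphism $f \cn A \to B$ is precisely a right map structure on $U_s f$ in $\cK$ that is compatible with the algebra structures — and because $f$ is a \emph{strict} map, this compatibility is automatic, so a right map structure on $f$ in $T\mh\Alg_s$ is the same thing as a lali structure on the underlying $1$-cell $U_s f$ in $\cK$. (This is the precise point where strictness of the maps in $T\mh\Alg_s$ is used.)

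Part \eqref{it:bg-3} then recovers and refines \cref{prop:bkp-access}. The left adjoint $Q \dashv i$ is the one produced by \cref{prop:bkp-access} from accessibility; alternatively, $Q$ is obtained by factoring, in the AWFS of \eqref{it:bg-2}, a canonically chosen strict map built from the lax-algebra data of an object $B \in T\mh\Alg_l$, and taking the $L$-coalgebra part, so that $QB$ is a strict algebra and the $R$-map part of the factorization provides the counit component $\epz_B \cn QiQB \to QB$ — wait, more precisely $\epz_B \cn Q i B \to B$ in the usual indexing — equipped by construction with an $R$-algebra structure, i.e. with a lali structure on its underlying $1$-cell by \eqref{it:bg-2}. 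Uniqueness of factorizations in an AWFS gives compatibility with \cref{prop:bkp-access}, so the two descriptions of $Q$ agree. The main obstacle is not any of the category-theoretic bookkeeping but rather correctly importing the delicate setup of \cite{BG2014AWFSII} — in particular verifying that the coherence data of the lax $T$-algebra $B$ assembles into a morphism in $T\mh\Alg_s$ that, when factored, has its right factor carrying the right map structure claimed for $\epz$; since the paper treats the AWFS machinery as a black box, I would state part \eqref{it:bg-3} as a citation with only this last compatibility spelled out.
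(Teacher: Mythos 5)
The paper offers no proof of this proposition: it is imported directly from Bourke--Garner \cite{BG2014AWFSII} and explicitly treated as a black box, so your citation-based assembly is essentially the paper's own approach and is acceptable. Two glosses in your sketch are slightly off, though neither affects the citation: accessibility of $T$ plays no role in part \eqref{it:bg-1}, which concerns the lali algebraic weak factorization system on $\cK$ alone (accessibility is what is needed for parts \eqref{it:bg-2} and \eqref{it:bg-3}, e.g.\ for cocompleteness of $T\mh\Alg_s$ and the existence of $Q$); and the objects of $T\mh\Alg_l$ are \emph{strict} $T$-algebras---laxness lives only in the 1-cells---so there is no ``lax-algebra data'' of $B$ to feed into the factorization; rather, $Q$ arises in \cite{BG2014AWFSII} by factoring a canonical strict map associated to each strict algebra in the lifted factorization system, with the right factor supplying the counit together with its right-map (lali) structure.
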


We combine the previous result with the theory of
\cref{sec:adjunctions-creating-weak-equivalences} to prove the
following.  This is the theorem we use most frequently in examples.

\begin{thm}\label{thm:algebra-rel-var-adj-equiv}
  Let $T$ be an accessible 2-monad on a complete and cocomplete
  2-category $\cK$.  Let $\cW_s$ be a collection of 1-cells which make
  the underlying 1-category of $T\mh\Alg_s$ a category with weak
  equivalences and assume $\cW_s$ contains all 1-cells $f$ such that
  $U_s f$ admits a lali structure.  Then there exists a left adjoint
  $Q$ and a unique collection of 1-cells $\cW_l$ created by $Q \dashv
  i$. Consequently,
  \[
  \begin{xy}
    0;<20mm,0mm>:<0mm,15mm>:: 
    (-1,0)*+{(T\mh\Alg_s, \cW_s)}="ts";
    (1,0)*+{(T\mh\Alg_l, \cW_l)}="tl";
    {\ar@/_3mm/_-{i} "ts"; "tl"};
    {\ar@/_3mm/_-{Q} "tl"; "ts"};
    (0,0)*+{\bot}="v";
  \end{xy}
  \]
  establishes an adjoint equivalence of homotopy theories.
\end{thm}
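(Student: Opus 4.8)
The plan is to deduce \cref{thm:algebra-rel-var-adj-equiv} by verifying the hypotheses of \cref{thm:equiv-conditions-left-merged}, using \cref{prop:bourke-garner} to supply the left adjoint and the crucial information about the counit. First I would invoke \cref{prop:bkp-access} (equivalently \cref{prop:bourke-garner}\eqref{it:bg-3}) to obtain the left 2-adjoint $Q \dashv i$ of the map extension $i\cn T\mh\Alg_s \monoto T\mh\Alg_l$; passing to underlying 1-categories, this is an ordinary adjunction between categories, and $i$ is the identity on objects since strict and lax algebra maps have the same underlying 0-cells. So we are in the situation of \cref{thm:equiv-conditions-left-merged} with $\cC_\tau = T\mh\Alg_s$, $\cC_\la = T\mh\Alg_l$, and $\cW_\tau = \cW_s$.

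The heart of the argument is to establish condition \eqref{ecl-Q-epz}: that $Q$ creates weak equivalences and $\epz \in \cW_s$, after \emph{defining} $\cW_l$ appropriately. The natural choice is to set $\cW_l$ to be exactly the class of loose maps $f$ with $Qf \in \cW_s$; then $Q$ creates weak equivalences essentially by fiat, and one must check this makes $(T\mh\Alg_l, \cW_l)$ a category with weak equivalences (2-out-of-3 and containment of isomorphisms both transfer along the functor $Q$, using that $Q$ preserves isomorphisms and that $\cW_s$ has these properties). For the counit: by \cref{prop:bourke-garner}\eqref{it:bg-2} and \eqref{it:bg-3}, each component $\epz_A$ is a strict algebra map whose underlying 1-cell $U_s \epz_A$ carries a lali structure; the hypothesis that $\cW_s$ contains all 1-cells $f$ with $U_s f$ admitting a lali structure then gives $\epz \in \cW_s$ immediately. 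With \eqref{ecl-Q-epz} in hand, \cref{thm:equiv-conditions-left-merged} yields \eqref{ecl-i-epz}, \eqref{ecl-i-eta}, \eqref{ecl-Q-eta}, and the conclusion that $Q \dashv i$ is an adjoint equivalence of homotopy theories.

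For uniqueness of $\cW_l$: any collection $\cW_l'$ created by $Q \dashv i$ must in particular have $Q$ create it, so $f \in \cW_l'$ iff $Qf \in \cW_s$, which pins down $\cW_l'$ as the class we defined; and conversely $i$ must also create it, but by \cref{thm:equiv-conditions-left-merged} that requirement is automatic once $Q$ does and $\epz \in \cW_s$. Thus $\cW_l$ is the unique such collection. I would also remark that the hypothesis ``$\cW_s$ is closed under 1-cells with lali underlying data'' is exactly what is needed — and is a mild, conservativity-style condition in practice, since a 1-cell with a lali underlying map is in particular a 1-cell whose underlying map is split monic with a one-sided inverse, hence typically already a weak equivalence.

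The main obstacle I anticipate is the verification that $\cW_l$, as defined via $Q$, actually satisfies 2-out-of-3: this needs that $Q$, being a left adjoint, preserves composition (trivial) but one must be slightly careful that $\cW_l$ so defined contains all isomorphisms of $T\mh\Alg_l$ — this holds because $Q$ sends an iso to an iso and $\cW_s$ contains all isos. A secondary subtlety is bookkeeping the passage between the 2-categorical statement of \cref{prop:bourke-garner} and the purely 1-categorical relative-category framework of \cref{thm:equiv-conditions-left-merged}; but since the relevant structures (the adjunction $Q \dashv i$, the counit $\epz$, the lali structures) all live at the level of underlying 1-categories once we apply $U_s$, this is routine rather than genuinely difficult.
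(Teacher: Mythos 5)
Your proposal is correct and is essentially the paper's own proof, which is stated in one line as ``combine \cref{thm:equiv-conditions-left-merged} and \cref{prop:bourke-garner}'': you obtain $Q\dashv i$ and the lali structure on the counit from \cref{prop:bourke-garner}, define $\cW_l$ by creation along $Q$, verify condition \eqref{ecl-Q-epz}, and get existence, uniqueness, and the adjoint equivalence exactly as the authors intend. (The only quibble is your parenthetical describing a 1-cell with a lali structure as ``split monic''---since $fg = \id$ it is a split epimorphism and a left adjoint---but this aside plays no role in the argument.)
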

\begin{proof}
  This follows by combining
\cref{thm:equiv-conditions-left-merged,prop:bourke-garner}.
\end{proof}

\begin{rmk}
  There is a version of this theory that works with oplax morphisms
  instead of lax ones, and the algebraic weak factorization system
  involved uses ralis (right adjoint, left inverse) for its right maps
  instead of lalis.  Alternatively, there is a pseudo-strength
  version, using pseudomorphisms, and the corresponding algebraic weak
  factorization system is that for retract equivalences.  See
  \cite{BG2014AWFSII} for more details.  In each case we have a
  corresponding version of \cref{prop:bourke-garner} and
  \cref{thm:algebra-rel-var-adj-equiv}.
\end{rmk}

\subsection{Monads on \texorpdfstring{$\cK = \Cat$}{K = Cat}}
\label{sec:K-is-Cat}

Let $\cK = \Cat$ and let $T$ be any accessible 2-monad, for example, the
2-monad arising from an operad.  Let $(\Cat,\cV)$ be any weak
equivalence structure for which $\cV$ contains all adjunctions.
Such classes of weak equivalences arise naturally in homotopy theory.
Examples include the class of functors for which the induced map on nerves
is a weak homotopy equivalence and the class of functors for which the
induced map on nerves is an $E$-(co)homology isomorphism 
for some spectrum $E$.  

Let $\cW_s$ be the weak equivalence structure on $T\mh\Alg_s$ created
by
\[
U_s\cn T\mh\Alg_s \to \Cat
\]
and let $\om$ be any of $l$, $op$, or $ps$.  Then, by the appropriate
variant of \cref{thm:algebra-rel-var-adj-equiv}, the category
$T\mh\Alg_\om$ has the weak equivalence structure created by $Q \dashv
i$ and we have an adjoint equivalence of homotopy theories.  The
hypothesis that $\cV$ contains all adjunctions ensures that the counit
$\epz$ is a weak equivalence.  By \cref{thm:u-creates} this is also
the weak equivalence structure created by the forgetful functor
$U_\om\cn T\mh\Alg_\om \to \Cat$.

\begin{notn}
  Let $\mathit{we}$ denote the class of weak homotopy equivalences in
  $\Cat$, i.e., those functors which induce a weak homotopy
  equivalence on nerves.  We abusively use this notation for any class
  of weak equivalences created by a functor to $(\Cat,
  \,\mathit{we})$.
\end{notn}

\begin{example}[Symmetric monoidal categories]\label{eg:sym-mon-cat}
  The prototypical example of this kind is when $T$ is the 2-monad for
  symmetric monoidal categories.  Then $T\mh\Alg_s$ is the 2-category
  of symmetric monoidal categories, symmetric strict monoidal
  functors, and monoidal transformations, while $T\mh\Alg_l$ has the
  same objects but symmetric lax monoidal functors.  Let $\cV =
  \mathit{we}$ and let the underlying category functor $\SMC_s \to
  \Cat$ create weak equivalences.  By
  \cref{thm:algebra-rel-var-adj-equiv} we have the following adjoint
  equivalence of homotopy theories over $(\Cat, \mathit{we})$.
  \[
  \begin{xy}
    0;<25mm,0mm>:<0mm,15mm>:: 
    (-1,0)*+{(\SMC_s,\, \mathit{we})}="ts";
    (1,0)*+{(\SMC_l,\, \mathit{we})}="tl";
    (0,-1)*+{(\Cat, \mathit{we})}="k";
    {\ar@/_3mm/_-{i} "ts"; "tl"};
    {\ar@/_3mm/_-{Q} "tl"; "ts"};
    {\ar_-{U_s} "ts"; "k"};
    {\ar^-{U_l} "tl"; "k"};
    (0,0)*+{\bot}="v";
  \end{xy}
  \]
  As noted above, one also has pseudo and oplax variants of this
  example which likewise give adjoint equivalences of homotopy
  theories.  The pseudo algebra maps in this case are the strong
  symmetric monoidal maps.
\end{example}

\begin{example}[Symmetric monoidal categories and normal functors]\label{eg:sym-mon-cat-nor}
  A slight variant of our first example uses a different 2-monad $T$
  on $\Cat_*$ whose algebras are still symmetric monoidal categories.
  In this case, the specified base point becomes the unit object of the
  symmetric monoidal structure.  The category $T\mh\Alg_s$ consists of
  symmetric monoidal categories and symmetric strict monoidal
  functors, while $T\mh\Alg_l$ is now the category of symmetric
  monoidal categories and \emph{normal} (i.e., strictly unit
  preserving) symmetric lax monoidal functors.  We take $\cV =
  \mathit{we}$ in $\Cat_{*}$ to be the class of unbased weak homotopy
  equivalences (created by the forgetful functor to $\Cat$).  By
  \cref{thm:algebra-rel-var-adj-equiv} we have the following adjoint
  equivalence of homotopy theories over $(\Cat_*,\, \mathit{we})$.
  \[
  \begin{xy}
    0;<25mm,0mm>:<0mm,15mm>:: 
    (-1,0)*+{(\SMC_s,\, \mathit{we})}="ts";
    (1,0)*+{(\SMC_{nl},\, \mathit{we})}="tl";
    (0,-1)*+{(\Cat_*, \mathit{we})}="k";
    {\ar@/_3mm/_-{i} "ts"; "tl"};
    {\ar@/_3mm/_-{Q} "tl"; "ts"};
    {\ar_-{U_s} "ts"; "k"};
    {\ar^-{U_l} "tl"; "k"};
    (0,0)*+{\bot}="v";
  \end{xy}
  \]
\end{example}

We also have oplax and pseudo variants of the previous examples.
Combining these yields the following strengthening of
\cite[3.9]{Man10Inverse}.
\begin{thm}\label{thm:sym-mon-cat-we}
  The homotopy theory of $(\SMC_{s},\, \mathit{we})$ is equivalent to
  the homotopy theory of each of the following.
  \vspace{-1pc} 
  \setlength{\linewidth}{.7\linewidth}
  \begin{multicols}{2}
  \begin{itemize}
    \item  $(\SMC_{ps},\, \mathit{we})$
    \item  $(\SMC_{l},\, \mathit{we})$
    \item  $(\SMC_{op},\, \mathit{we})$
    \item  $(\SMC_{nps},\, \mathit{we})$
    \item  $(\SMC_{nl},\, \mathit{we})$
    \item  $(\SMC_{nop},\, \mathit{we})$
  \end{itemize}
  \end{multicols}
\end{thm}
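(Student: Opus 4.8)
The plan is to invoke \cref{thm:algebra-rel-var-adj-equiv}, together with its oplax and pseudo variants recorded in the remark that follows it, a total of six times: three applications with the accessible 2-monad $T$ for symmetric monoidal categories on $\Cat$ (yielding the lax, oplax, and strong variants $\SMC_l$, $\SMC_{op}$, $\SMC_{ps}$, as in \cref{eg:sym-mon-cat}), and three more with the accessible 2-monad on $\Cat_*$ of \cref{eg:sym-mon-cat-nor} whose algebras are again symmetric monoidal categories (yielding the normal variants $\SMC_{nl}$, $\SMC_{nop}$, $\SMC_{nps}$). In every case $\Cat$, respectively $\Cat_*$, is a complete and cocomplete 2-category and $T$ is finitary, hence accessible, so the standing hypotheses of \cref{thm:algebra-rel-var-adj-equiv} hold.

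The only hypothesis needing attention is that $\cW_s$ contains every 1-cell $f$ for which $U_s f$ carries a right-map structure for the relevant algebraic weak factorization system, i.e.\ a lali structure (lax case), a rali structure (oplax case), or a retract-equivalence structure (pseudo case). In each case $U_s f$ is one leg of an adjunction in $\Cat$, or an equivalence of categories, and the nerve of any such functor is a homotopy equivalence; since $\cW_s$ is by construction the class created by the underlying-category functor $U_s$ (in the $\Cat_*$ situation, by $U_s\cn\SMC_s\to\Cat_*$ followed by the forgetful functor to $\Cat$, which by \cref{eg:sym-mon-cat-nor} creates $\mathit{we}$), it follows that $f \in \cW_s$. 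Note also that the two resulting descriptions of $\cW_s$---one from the $\Cat$-monad, one from the $\Cat_*$-monad---coincide, both being the class of symmetric strict monoidal functors that are weak homotopy equivalences on underlying categories; this is what lets all six applications share the same source homotopy theory $(\SMC_s,\,\mathit{we})$.

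Granting this, \cref{thm:algebra-rel-var-adj-equiv} and its variants provide, for each target $\om \in \{l, op, ps, nl, nop, nps\}$, a left adjoint $Q$ to $i\cn\SMC_s\monoto\SMC_\om$ and a unique class $\cW_\om$ created by $Q\dashv i$, with $Q\dashv i$ an adjoint equivalence of homotopy theories; by \cref{lem:ajd-equiv-is-equiv} this is in particular an equivalence of homotopy theories. Finally, \cref{thm:u-creates} applied to the commuting triangle $U_\om\circ i = U_s$ identifies $\cW_\om$ with the class created by $U_\om$, i.e.\ with $\mathit{we}$ in the abusive notation, so each of the six equivalences is of the asserted form $(\SMC_s,\,\mathit{we})\simeq(\SMC_\om,\,\mathit{we})$. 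There is no real obstacle here: all the substance sits in \cref{prop:bourke-garner} and \cref{thm:algebra-rel-var-adj-equiv}, and the residual work is the classical fact that adjunctions and equivalences of categories become homotopy equivalences on nerves, plus the bookkeeping of matching each of the six target categories with the correct 2-monad and factorization system (lalis, ralis, or retract equivalences) while checking that the weak equivalences on the common source $\SMC_s$ are unchanged throughout.
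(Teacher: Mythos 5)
Your proposal is correct and follows essentially the same route as the paper: it combines \cref{thm:algebra-rel-var-adj-equiv} (and the oplax and pseudo variants noted in the remark following it) applied to the symmetric monoidal 2-monad on $\Cat$ and the based variant on $\Cat_*$, exactly as in \cref{eg:sym-mon-cat} and \cref{eg:sym-mon-cat-nor}, with the hypothesis on $\cW_s$ discharged because lalis, ralis, and retract equivalences are adjunctions or equivalences and hence nerve equivalences, and with \cref{thm:u-creates} identifying the created weak equivalences with those detected by $U_\om$. This matches the paper's argument in \cref{sec:K-is-Cat}, so there is nothing further to add.
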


\begin{example}[Stable equivalences of symmetric monoidal categories]\label{eg:sym-mon-cat-nop}
  For a final variant concerning symmetric monoidal categories, we
  take the normal, oplax version of the above example.  For the
  ``underlying'' category we now take the category of
  $\Ga$-categories, with $\cV = \mathit{st~eq}$ being the class of
  stable equivalences \cite{BF1978}.  This example differs from the
  previous ones in that we do not know whether $K$ satisfies
  monadicity and therefore cannot apply
  \cref{thm:algebra-rel-var-adj-equiv}.  However we can apply
  \cref{thm:equiv-conditions-left-merged,thm:u-creates} directly.  Let
  $U_\la = K$ be the $K$-theory functor for normal, oplax symmetric
  monoidal functors from \cite{Man10Inverse}, and let $U_\tau = K$ be
  the restriction to strict functors.
  
  Let $\cW_s = \mathit{st~eq}$ be the weak equivalences created by
  $K$.  Then the left adjoint $Q$ arises as in the previous examples
  but via the oplax variant of \cref{prop:bkp-access}.  We therefore
  have the following adjoint equivalence of homotopy theories over
  $(\Ga\mh\Cat, \, \mathit{st~eq})$.
  \[
  \begin{xy}
    0;<25mm,0mm>:<0mm,15mm>:: 
    (-1,0)*+{(\SMC_s, \, \mathit{st~eq})}="ts";
    (1,0)*+{(\SMC_{nop}, \, \mathit{st~eq})}="tl";
    (0,-1)*+{(\Ga\mh\Cat, \, \mathit{st~eq})}="k";
    {\ar@/_3mm/_-{i} "ts"; "tl"};
    {\ar@/_3mm/_-{Q} "tl"; "ts"};
    {\ar_-{K} "ts"; "k"};
    {\ar^-{K} "tl"; "k"};
    (0,0)*+{\bot}="v";
  \end{xy}
  \]
\end{example}

Once again we can consider other variants, making use of the
alternative definitions of $K$ given in \cite{Man10Inverse}.  Together
these give the following generalization of
\cite{Tho95Symmetric,Man10Inverse}.
\begin{thm}\label{thm:sym-mon-cat-st-eq}
  The homotopy theory of $(\SMC_{s},\, \mathit{st~eq})$ is equivalent
  to the homotopy theory of each of the following.
  \vspace{-1pc} 
  \setlength{\linewidth}{.7\linewidth}
  \begin{multicols}{2}
  \begin{itemize}
    \item  $(\SMC_{ps},\, \mathit{st~eq})$
    \item  $(\SMC_{l},\, \mathit{st~eq})$
    \item  $(\SMC_{op},\, \mathit{st~eq})$
    \item  $(\SMC_{nps},\, \mathit{st~eq})$
    \item  $(\SMC_{nl},\, \mathit{st~eq})$
    \item  $(\SMC_{nop},\, \mathit{st~eq})$
  \end{itemize}
  \end{multicols}
\end{thm}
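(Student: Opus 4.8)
The plan is to establish each of the six equivalences by running the argument of \cref{eg:sym-mon-cat-nop} once for each morphism strength. For the three non-normal strengths, take $\cK=\Cat$ and $T$ the usual $2$-monad for symmetric monoidal categories; for the three normal strengths, take $\cK=\Cat_*$ and $T$ the $2$-monad of \cref{eg:sym-mon-cat-nor}. In every case the role of the underlying category with weak equivalences is played by $(\Ga\mh\Cat,\,\mathit{st~eq})$: we let $U_\la=K$ be the corresponding variant of Mandell's $K$-theory functor (the ``alternative definitions of $K$'' of \cite{Man10Inverse}), we let $U_\tau=K$ be its restriction to strict maps, and we let $\cW_s=\mathit{st~eq}$ be the class of $1$-cells created by $U_\tau$. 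Granting (i) the existence of a left adjoint $Q$ to $i$, and (ii) that the counit $\epz$ of $Q\dashv i$ lies in $\mathit{st~eq}$, the asserted equivalence follows from the appropriate strength variant of \cref{thm:algebra-rel-var-adj-equiv}, or from \cref{thm:equiv-conditions-left-merged} together with \cref{thm:u-creates} applied directly as in \cref{eg:sym-mon-cat-nop}: \cref{thm:equiv-conditions-left-merged} supplies a unique $\cW_\om$ created by $Q\dashv i$ and the adjoint equivalence of homotopy theories, and \cref{thm:u-creates} identifies $\cW_\om$ with the class created by $U_\om=K$, i.e.\ with $\mathit{st~eq}$ on $\SMC_\om$.

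Point (i) is immediate: the symmetric monoidal $2$-monad is accessible on the complete and cocomplete $2$-categories $\Cat$ and $\Cat_*$, so \cref{prop:bkp-access} and its oplax and pseudo analogues produce $Q$ in all six cases. For point (ii), \cref{prop:bourke-garner}\eqref{it:bg-3} and the corresponding oplax and pseudo statements of \cite{BG2014AWFSII} show that the underlying functor (in $\Cat$, respectively $\Cat_*$) of each component $\epz_A$ of the counit carries a lali structure, respectively a rali structure, respectively a retract-equivalence structure; what we must check is that $K$ sends such maps to stable equivalences.

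This last point is the crux, and the main obstacle: it is \emph{not} enough that $\epz_A$ be a homotopy equivalence on underlying categories, since $K$-theory is not an invariant of that homotopy type. The argument I would use is that the section $g$ with $\epz_A g=\id$ and the $2$-cell $\eta\cn\id\Rightarrow g\epz_A$ provided by \cref{prop:bourke-garner} transport, via doctrinal adjunction, the monoidal structure of $\epz_A$ to $g$, so that $g$ is a lax (respectively oplax, respectively strong) symmetric monoidal functor and $\eta$ is a monoidal transformation; thus $\epz_A$ becomes a ``monoidal deformation retraction'' in $\SMC_\om$. Since each variant of $K$ is $2$-functorial, carrying monoidal transformations to natural transformations of $\Ga$-categories, applying $K$ to this data yields a levelwise homotopy equivalence $K(QA)\to K(A)$, hence a stable equivalence, so $\epz\in\mathit{st~eq}$. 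Equivalently, $QA$ is a model for a cofibrant replacement of $A$, and one may instead cite the verification of Thomason \cite{Tho95Symmetric} and Mandell \cite{Man10Inverse} that $K$ inverts such replacement maps.

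Carrying this out for all six strengths $\om\in\{ps,l,op,nps,nl,nop\}$ yields the six equivalences of homotopy theories, all with common vertex $(\SMC_s,\,\mathit{st~eq})$. Besides the crux above, the only remaining care needed is to confirm that Mandell's alternative $K$-theory functors for the strengths other than $nop$ exist, take values in $\Ga\mh\Cat$, restrict compatibly to the strict functor $K$, and are $2$-functorial; granting \cite{Man10Inverse} this is routine.
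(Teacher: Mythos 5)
Your proposal matches the paper's own argument: the theorem is proved exactly by running \cref{eg:sym-mon-cat-nop} once for each morphism strength, using the oplax/pseudo/normal variants of \cref{prop:bkp-access} and \cref{prop:bourke-garner} to produce $Q$ and the structured counit, Mandell's alternative $K$-theory functors as $U_\tau$ and $U_\la$, and \cref{thm:equiv-conditions-left-merged} together with \cref{thm:u-creates} to create $\mathit{st~eq}$ on the loose side and obtain the adjoint equivalence. Your doctrinal-adjunction argument that $K\epz$ is a stable equivalence correctly fills in a step the paper leaves implicit (if anything it is more than is needed, since the counit is a symmetric monoidal functor that is an underlying weak equivalence and $K$ takes values in special $\Ga$-categories, so it is inverted by $K$ already), so the route is essentially identical.
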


Our next examples concern $n$-fold monoidal categories for $n \ge 1$.
These were introduced by Balteanu-Fiedorowicz-Schw\"anzl-Vogt
\cite{BFSV2003Iterated} and are the algebras over an operad $\sM_n$
whose geometric realization is equivalent to the little $n$-cubes
operad.

Alternatively, an $n$-fold monoidal category can be defined
iteratively as a monoid in the category
$(n-1)\mh\mathpzc{Mon}\Cat_{nl}$ of $(n-1)$-fold monoidal categories
and normal lax monoidal maps.  Laxity of the monoid structure map
gives rise to interchange maps between the $n$ different monoidal
products and also to a hexagonal interchange axiom.  A 1-fold monoidal
category is simply a monoidal category.  The notion of braided
monoidal category is equivalent to that of a 2-fold monoidal category
where both products are the same and their interchange transformation
is invertible.

To apply our general theory we must identify the lax maps of $n$-fold
monoidal categories as the lax algebra maps for the 2-monad associated
to $\sM_n$.  This does not appear in the literature, but follows from
Bourke's 2-dimensional monadicity (see \cref{sec:2D-monadicity}).
  
\begin{prop}\label{prop:n-MonCat-lax-is-algebras}
  Let $n \ge 1$.  The 2-category $n\mh\MonCat_l$ of $n$-fold monoidal
  categories and lax maps is 2-equivalent to the 2-category of
  algebras and lax algebra maps associated to the 2-monad $\sM_n$.
\end{prop}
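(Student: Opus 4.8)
The plan is to deduce \cref{prop:n-MonCat-lax-is-algebras} from Bourke's theory of two-dimensional monadicity, recalled in \cref{sec:2D-monadicity}. Let $U\cn n\mh\MonCat_l \to \Cat$ be the 2-functor sending an $n$-fold monoidal category to its underlying category, a lax map to its underlying functor, and a transformation to its underlying natural transformation, and write $U_s$ for its restriction to strict maps. Together these present $n\mh\MonCat$, equipped with the (strict, lax) $\sF$-structure, as an $\sF$-category over $\Cat$ (with all arrows of $\Cat$ tight). The overall strategy has three steps: (1) recognize $U_s$ as 2-monadic with associated 2-monad $\sM_n$; (2) check that $U$, together with this $\sF$-structure on $n\mh\MonCat$, satisfies the hypotheses of Bourke's recognition theorem for lax algebra morphisms; (3) conclude that $n\mh\MonCat$ is 2-equivalent, as an $\sF$-category, to $(\sM_n\mh\Alg_s, \sM_n\mh\Alg_l)$, and in particular that $n\mh\MonCat_l \simeq \sM_n\mh\Alg_l$.

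Step (1) is the standard passage from operads to monads: $\sM_n$ is an operad in $\Cat$, its associated free-algebra 2-monad (which we also call $\sM_n$) is accessible since $\sM_n$ is levelwise small, $\Cat$ is complete and cocomplete, and an operad algebra for $\sM_n$ together with operad algebra maps is precisely a strict $\sM_n$-algebra with strict algebra maps; hence $n\mh\MonCat_s \simeq \sM_n\mh\Alg_s$ over $\Cat$. Since $\sM_n$ is accessible, \cref{prop:bkp-access} and \cref{prop:bourke-garner} already supply a left 2-adjoint $Q$ to $i\cn \sM_n\mh\Alg_s \to \sM_n\mh\Alg_l$ arising from an algebraic weak factorization system; the only remaining issue is that this abstractly-defined $\sM_n\mh\Alg_l$ coincides with the hands-on 2-category $n\mh\MonCat_l$.

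For step (2) I would verify Bourke's hypotheses for $U$. On the limit side this means checking that $\Cat$ has, and $U$ creates, the $\sF$-categorical limits used in the recognition criterion (products, inserters, equifiers, and oplax limits of arrows among the tight maps, together with the relevant loose limits), which is routine because (co)limits of $n$-fold monoidal categories are computed on underlying categories. On the other side one must exhibit the lax-morphism classifier, i.e.\ the left adjoint to the tight-into-loose inclusion with the structure required by Bourke; here \cref{prop:bourke-garner} does the work, or alternatively one uses the iterative description of $n$-fold monoidal categories as monoids in $(n-1)\mh\mathpzc{Mon}\Cat_{nl}$ to build it by hand. In the course of this one unwinds the 2-monad $\sM_n$ far enough to see that a lax $\sM_n$-algebra morphism $A \to B$ is a functor together with coherence 2-cells indexed by the operations of $\sM_n$ --- generated, via the lax associativity and unit axioms, by comparison 2-cells for the $n$ basic products --- and to match these against the interchange 2-cells, normality, and hexagonal interchange axiom that define a lax map of $n$-fold monoidal categories in the sense of \cite{BFSV2003Iterated}.

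The main obstacle is precisely this last reconciliation between the ``global'' description of lax $T$-algebra maps for the operadic 2-monad $T = \sM_n$ and the ``generators-and-relations'' definition by which $n\mh\MonCat_l$ is given; carrying it out completely by hand would be a lengthy coherence computation. Routing the argument through Bourke's theorem is what keeps this manageable: one need only push the comparison far enough to verify the hypotheses of the recognition theorem --- essentially creation of a short list of limits plus existence of the lax-morphism classifier --- after which the 2-equivalence and the identification of the classifying 2-monad as $\sM_n$ both follow formally. I would close by remarking that the same argument, using the oplax- and pseudo-morphism variants of Bourke's theorem together with the corresponding variants of \cref{prop:bourke-garner} noted in \cref{sec:applications}, identifies $n\mh\MonCat_{op}$ and $n\mh\MonCat_{ps}$ with the oplax and pseudo algebra 2-categories for $\sM_n$.
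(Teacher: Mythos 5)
Your top-level strategy agrees with the paper's: both proofs go through Bourke's two-dimensional monadicity (\cref{thm:2D-monadicity}) with $\cA_\tau = n\mh\MonCat_s$, $\cA_\la = n\mh\MonCat_l$, $\cB = \Cat$, and the 2-monadicity of the strict forgetful functor with 2-monad $\sM_n$ taken as standard operadic input. But you verify the wrong hypotheses, and the two that actually require work are missing or mishandled. First, the theorem as used here has no ``lax-morphism classifier'' hypothesis; its conditions are that $\cB$ and $j$ admit colax limits of arrows, that $H_\la$ is locally faithful and reflects identity 2-cells, and that $(H_\tau,H_\la)$ satisfies lax doctrinal adjunction. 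You never mention lax doctrinal adjunction at all, and this is one of the two substantive verifications: given a strict map $f$ of $n$-fold monoidal categories and an adjoint $g$ of its underlying functor, one must lift $g$ uniquely to a lax map (via the doctrinal-adjunction composite $g(b)\otimes_i g(b') \to gf(g(b)\otimes_i g(b')) = g(fg(b)\otimes_i fg(b')) \to g(b\otimes_i b')$ for each $\otimes_i$) and check compatibility with the interchange transformations. Moreover, invoking \cref{prop:bourke-garner} to supply a classifier for $n\mh\MonCat$ is circular: that proposition concerns $\sM_n\mh\Alg_l$, which is exactly the 2-category you are trying to identify with $n\mh\MonCat_l$.

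Second, your stated ``main obstacle'' --- unwinding the 2-monad $\sM_n$ far enough to match lax $\sM_n$-algebra morphisms against the interchange cells and hexagon axiom of \cite{BFSV2003Iterated} --- is precisely the coherence comparison that Bourke's theorem is designed to eliminate. None of its hypotheses refer to lax algebra morphisms for $T$; they are conditions on the hand-defined 2-categories and forgetful functors only, and the identification of lax algebra maps with hand-defined lax maps is the \emph{conclusion}, not a step. If you carried out that matching directly you would not need the theorem. Finally, the colax-limit hypothesis is not ``routine because (co)limits are computed on underlying categories'': the colax limit of a lax arrow $f\cn A \to B$ is not a limit of a diagram in $n\mh\MonCat_s$. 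One must take the category of triples $(a,b,\si\cn b \to f(a))$, endow it with $n$ monoidal products defined componentwise \emph{using the lax constraint cells of $f$}, define interchange pairwise, check the axioms, and verify the tightness-detection clause of \cref{defn:colax-lim}. That construction, together with lax doctrinal adjunction, is the actual content of the proof.
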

We prove \cref{prop:n-MonCat-lax-is-algebras} in \cref{sec:applications-of-Bourke-monadicity}.

\begin{example}[Iterated monoidal categories]\label{eg:n-mon}
  Let $T$ be the 2-monad on $\Cat$ associated to the operad $\sM_n$.
  Let $\cV = \mathit{we}$ and let $\cW_s$ be created by the forgetful
  functor.  By
  \cref{prop:n-MonCat-lax-is-algebras,thm:algebra-rel-var-adj-equiv}
  we have the following adjoint equivalence of homotopy theories over
  $(\Cat, \,\mathit{we})$.
  \[
  \begin{xy}
    0;<23mm,0mm>:<0mm,15mm>:: 
    (-1,0)*+{(n\mh\MonCat_s, \, \mathit{we})}="ts";
    (1,0)*+{(n\mh\MonCat_l,\, \mathit{we})}="tl";
    (0,-1)*+{(\Cat, \, \mathit{we})}="k";
    {\ar@/_3mm/_-{i} "ts"; "tl"};
    {\ar@/_3mm/_-{Q} "tl"; "ts"};
    {\ar_-{U_s} "ts"; "k"};
    {\ar^-{U_l} "tl"; "k"};
    (0,0)*+{\bot}="v";
  \end{xy}
  \]
  One also has a normal version of the previous example, similar to 
  \cref{eg:sym-mon-cat-nor}.  The lax algebra maps in this case
  correspond to the $n$-fold monoidal category maps considered in
  \cite{BFSV2003Iterated}.
\end{example}

\begin{example}[Group completion equivalences of iterated monoidal categories]\label{eg:n-mon-gc}
  Let $\cV = \mathit{gc\;eq}$ be the class of weak equivalences upon
  group completion of nerves.  For $n = 1$ this means equivalence
  after applying $\Om B$ as discussed in
  \cite{Seg74Categories,BFSV2003Iterated}.  Applying
  \cref{thm:equiv-conditions-left-merged} we have the following
  adjoint equivalence of homotopy theories.
    \[
  \begin{xy}
    0;<23mm,0mm>:<0mm,15mm>:: 
    (-1,0)*+{(n\mh\MonCat_s, \, \mathit{gc\;eq})}="ts";
    (1,0)*+{(n\mh\MonCat_{l},\, \mathit{gc\;eq})}="tl";
    {\ar@/_3mm/_-{i} "ts"; "tl"};
    {\ar@/_3mm/_-{Q} "tl"; "ts"};
    (0,0)*+{\bot}="v";
  \end{xy}
  \]
\end{example}

\subsection{Diagrams in a 2-category}
\label{sec:diagrams}

Let $I$ be a small 2-category and let $\cK$ be a complete and
cocomplete 2-category.  Let $\ob I$ denote the discrete 2-category
with the same objects as $I$.  The inclusion $\ob I \to I$ induces a
2-functor
\[
U \cn [I,\cK] \to [\ob I, \cK],
\]
where $[-,-]$ denotes the 2-category of 2-functors, 2-natural
transformations, and modifications.

This $U$ has left and right adjoints given by Kan extensions and is
conservative so is 2-monadic.  The associated 2-monad $T = U \circ
\Lan$ can be computed using a coend formula.  Now since $U$ has both
adjoints it preserves all limits and colimits.  The left Kan extension
is a left adjoint so preserves colimits.  Therefore $T$ preserves all
colimits and hence is accessible.  We summarize this discussion in
\cref{prop:i-monadic}.

\begin{prop}\label{prop:i-monadic}
  The 2-functor
  \[
  U\cn [I,\cK] \monoto [\ob I, \cK]
  \]
  is 2-monadic and the associated 2-monad is accessible.
\end{prop}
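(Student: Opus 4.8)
The plan is to verify the three essential conditions of the $\Cat$-enriched monadicity theorem cited above for the 2-functor $U \cn [I,\cK] \to [\ob I,\cK]$, and then separately check accessibility of the resulting 2-monad. First I would observe that $U$ is simply precordination by the inclusion $\ob I \hookrightarrow I$ of the discrete 2-category on the objects of $I$; concretely, $U$ sends a 2-functor $F \cn I \to \cK$ to the family $(F(j))_{j \in \ob I}$, and acts similarly on 2-natural transformations and modifications. The first condition, the existence of a left 2-adjoint, follows because $\cK$ is cocomplete: the left 2-adjoint is the pointwise left Kan extension $\Lan \cn [\ob I,\cK] \to [I,\cK]$ along $\ob I \hookrightarrow I$, which exists precisely because $\cK$ has the requisite colimits and these Kan extensions are computed pointwise by a coend. (Dually, $\cK$ complete gives a right 2-adjoint via right Kan extension, which is what yields the closing remark that $U$ preserves all limits and colimits.)

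Next I would check that $U$ is conservative, i.e.\ reflects isomorphisms in the 2-categorical sense. A morphism in $[I,\cK]$ is a 2-natural transformation $\al \cn F \Rightarrow G$, and $\al$ is an isomorphism iff every component $\al_j \cn F(j) \to G(j)$ is an isomorphism in $\cK$; but this is exactly the statement that $U\al$ is an isomorphism in $[\ob I,\cK]$, since an isomorphism in a functor 2-category is detected componentwise. Invertibility of 2-cells is handled identically. So $U$ reflects isomorphisms. For the third condition, I would note that $[I,\cK]$ has all colimits (they are computed pointwise, using cocompleteness of $\cK$), in particular the coequalizers of $U$-split pairs required by the monadicity theorem, and that $U$ preserves them — again because colimits in both $[I,\cK]$ and $[\ob I,\cK]$ are pointwise and $U$ is evaluation at a subset of objects. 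This establishes 2-monadicity; the associated 2-monad is then $T = U \circ \Lan$, which can be written explicitly via the coend formula for the left Kan extension.

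Finally, for accessibility, I would argue exactly as in the paragraph preceding the statement: $\Lan$ is a left 2-adjoint, hence preserves all colimits; and $U$, having a further right 2-adjoint (right Kan extension), also preserves all colimits. Therefore $T = U \circ \Lan$ preserves all colimits, so in particular it preserves $\ka$-filtered colimits for every regular cardinal $\ka$, which is (more than) enough for accessibility. I do not anticipate a serious obstacle here: the only point requiring any care is making sure the monadicity theorem's coequalizer hypothesis is stated for the right class (U-split or U-absolute coequalizers) and that ``pointwise colimits in $[I,\cK]$'' is justified by the cocompleteness assumption on $\cK$ — but both of these are standard facts about enriched functor categories, so the proof is essentially an assembly of known pieces rather than anything novel.
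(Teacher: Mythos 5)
Your proposal is correct and follows essentially the same route as the paper: the left (and right) 2-adjoints come from pointwise Kan extensions using (co)completeness of $\cK$, conservativity and preservation of the requisite coequalizers give 2-monadicity via the enriched monadicity theorem, and accessibility follows because $T = U \circ \Lan$ is a composite of colimit-preserving 2-functors. The only blemish is the typo ``precordination'' (you mean precomposition with $\ob I \hookrightarrow I$), which does not affect the argument.
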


The next result recognizes $[I,\cK]_l$ as the 2-category of algebras and lax maps over $T$.

\begin{prop}\label{prop:diagrams}
  Let $I$, $\cK$, and $T$ be as above.  Then
  $T\mh\Alg_l \iso [I,\cK]_l$,
  the 2-category of diagrams, lax transformations, and
  modifications.
\end{prop}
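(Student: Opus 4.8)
The plan is to exhibit an explicit 2-functor $\Phi \cn T\mh\Alg_l \to [I,\cK]_l$ which is the identity on 0-cells and 2-cells and an isomorphism on hom-categories, and then check it is well-defined. The starting point is \cref{prop:i-monadic}: since $U \cn [I,\cK] \to [\ob I,\cK]$ is 2-monadic, we already have $T\mh\Alg_s \iso [I,\cK]$ as 2-categories over $[\ob I,\cK]$, so a $T$-algebra is precisely a 2-functor $X \cn I \to \cK$, with the algebra structure map $TX \to X$ encoding the action of the morphisms of $I$. The content of the proposition is therefore entirely about the 1-cells: I must show that a lax $T$-algebra map between $X$ and $Y$ is the same data, subject to the same axioms, as a lax transformation $X \Rightarrow Y$ in the usual 2-categorical sense (a family of 1-cells $f_a \cn Xa \to Ya$ for $a \in \ob I$ together with, for each morphism $u \cn a \to b$ in $I$, a 2-cell $f_u \cn Yu \circ f_a \Rightarrow f_b \circ Xu$ satisfying unit and composition coherence).

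First I would unwind the definition of a lax $T$-algebra map for this particular $T$. A lax map $(f,\bar f) \cn X \to Y$ consists of a 1-cell $f \cn UX \to UY$ in $[\ob I,\cK]$ — that is, just the family $(f_a)_{a \in \ob I}$ — together with a 2-cell $\bar f$ filling the square relating $Tf$ to the algebra actions, subject to the two standard lax coherence axioms. Using the coend formula for $T = U \circ \Lan$ mentioned before \cref{prop:i-monadic}, the 1-cell $UX = X|_{\ob I}$ has $TX$ computed as $(TX)_a = \coprod_{u \cn a' \to a} Xa'$, and the algebra action $TX \to X$ sends the $u$-summand via $Xu$. Transposing, a 2-cell $\bar f \cn (\text{action}_Y) \circ Tf \Rightarrow f \circ (\text{action}_X)$ in $[\ob I,\cK]$ is, by the universal property of the coproduct, exactly a family of 2-cells indexed by morphisms $u \cn a \to b$ of $I$, namely $f_u \cn Yu \circ f_a \Rightarrow f_b \circ Xu$ — which is precisely the component data of a lax transformation. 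I would then match the lax-algebra-map coherence axioms (the one involving the monad multiplication $T^2 \Rightarrow T$ and the one involving the unit $\id \Rightarrow T$) against the composition and identity axioms for lax transformations; the multiplication of $T$ encodes composition of morphisms in $I$, so the first axiom becomes the hexagon $f_{vu} = (f_b \cdot Xv)(\ldots)(Yv \cdot f_u)$ relating $f_{vu}$ to $f_v$ and $f_u$, and the unit axiom becomes normality of $f$ at identities up to the structure constraints of $X$ and $Y$. For the 2-cells, an algebra transformation is a 2-cell in $[\ob I,\cK]$ compatible with the $\bar f$'s, which unwinds directly to a modification between lax transformations; this is the easiest part.

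The main obstacle I anticipate is purely bookkeeping: verifying that the coend/Kan-extension description of $T$ translates the abstract lax-algebra axioms into exactly the classical lax-transformation axioms with the correct orientations and no stray coherence cells. One must be careful that $[I,\cK]_l$ really means lax (not oplax) transformations and that the variance of $\bar f$ matches — it is easy to land on the oplax version if one transposes in the wrong direction, and indeed the remark after \cref{prop:bkp-access} flags that there is a parallel oplax story. A clean way to organize this, and probably the route I would actually take to avoid a long diagram chase, is to invoke the general principle that for a 2-monad of the form $U \circ \Lan$ arising from a 2-functor $j \cn \ob I \to I$ between small 2-categories, the lax morphisms of $T$-algebras are computed by the "codescent/lax-limit" formula, so that $T\mh\Alg_l$ is the full sub-2-category of some functor 2-category on the $T$-algebras — but since the excerpt does not state this as a prior result, the self-contained argument is the coend computation sketched above. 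Finally, having produced mutually inverse assignments on 0-, 1-, and 2-cells that are each identity-on-underlying-data over $[\ob I,\cK]$, the isomorphism of 2-categories $T\mh\Alg_l \iso [I,\cK]_l$ follows, and functoriality of $\Phi$ (compatibility with horizontal and vertical composition) is immediate because both composites are computed componentwise in $\cK$.
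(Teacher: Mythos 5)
Your proposal is essentially the classical direct argument (it is the content of \cite[Example 6.6]{BKP1989Two}, which the paper cites as ``a straightforward, if uninteresting, calculation''), and it does prove the statement, but it is not the route the paper takes. The paper proves \cref{prop:diagrams} by verifying the hypotheses of Bourke's 2-dimensional monadicity theorem (\cref{thm:2D-monadicity}): 2-monadicity of $H_\tau$ from \cref{prop:i-monadic,prop:ij-monadic}, existence of colax limits of arrows computed levelwise, local faithfulness of $H_\la$, and lax doctrinal adjunction via the explicit object-wise mate construction $X(r)g_a \Rightarrow g_bY(r)$. The payoff of that structural approach is that it treats \cref{prop:diagrams} and the reduced version \cref{prop:red-diagrams} simultaneously: for the reduced case the 2-monad is $U\circ R\circ \Lan$, which has no simple coend/coproduct description, so your unwinding strategy would not transfer there, whereas the hypotheses of \cref{thm:2D-monadicity} do. On your side, two points of care: since $I$ is a small \emph{2-}category, the formula for $T$ is not a coproduct over 1-morphisms but a copower by hom-categories, $(TX)(a) = \coprod_{a'} I(a',a)\cdot X(a')$, so the structure 2-cell $\bar f$ unwinds to components $f_u$ that must in addition be natural in the 2-cells of $I$ --- your coproduct-over-$u$ description silently assumes $I$ is locally discrete (true in the paper's examples, not in the stated generality); and, as you note yourself, one must track the lax-versus-oplax variance when transposing. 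With those repairs your calculation goes through and gives a genuine isomorphism over $[\ob I,\cK]$; it is more explicit than the paper's argument but less robust, which is precisely why the authors route both propositions through 2-dimensional monadicity.
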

We prove \cref{prop:diagrams} simultaneously with a reduced version,
\cref{prop:red-diagrams}, using Bourke's 2-dimensional monadicity in
\cref{sec:applications-of-Bourke-monadicity}.  One also has pseudo and
oplax versions of \cref{prop:diagrams} giving $T\mh\Alg_{ps} \iso
[I,\cK]_{ps}$ and $T\mh\Alg_{op} \iso [I,\cK]_{op}$.  These facts are
well-known in the 2-categorical literature \cite[Example
6.6]{BKP1989Two} and follow from a straightforward, if uninteresting,
calculation.  The reduced version does not appear in the literature to
our knowledge, but is the case of interest for topological
applications.

Our next examples concern categories with group actions.  Let $G$ be a
discrete group, and let $I = BG$ be the small category with one object
having automorphisms given by $G$ and let $\cK = \Cat$.  A diagram $BG
\to \Cat$ is precisely a category with a $G$-action, and strict
diagram maps are $G$-equivariant functors.  For any morphism variant
$\om$ we denote $G\mh\Cat_\om = [BG, \Cat]_\om$.  Note that $[\ob BG,
  \Cat]$ is $\Cat$.

  The pseudonatural maps are functors that preserve equivariance only
  up to coherent natural isomorphisms. These appear naturally in
  examples, for instance, in Merling's study of equivariant algebraic
  $K$-theory \cite{Mer2017Equivariant}. There are several notions of
  weak equivalence in the category of small $G$-categories and strict
  equivariant maps that are of interest to algebraic topologists and
  we discuss these below.
  
\begin{example}[$G$-categories with underlying weak equivalences]\label{eg:g-cat}
  In this example we consider $G\mh\Cat$ with weak equivalences being
  the equivariant functors that are weak homotopy equivalences on
  underlying categories.  Let $\mathit{we}$ denote this class.
  Combining \cref{prop:i-monadic,prop:diagrams} with
  \cref{thm:algebra-rel-var-adj-equiv}, we have the following adjoint
  equivalence of homotopy theories over $(\Cat,\,\mathit{we})$.
  \[
  \begin{xy}
    0;<23mm,0mm>:<0mm,15mm>:: 
    (-1,0)*+{(G\mh\Cat_s, \, \mathit{we})}="ts";
    (1,0)*+{(G\mh\Cat_{ps},\, \mathit{we})}="tl";
    (0,-1)*+{(\Cat, \, \mathit{we})}="k";
    {\ar@/_3mm/_-{i} "ts"; "tl"};
    {\ar@/_3mm/_-{Q} "tl"; "ts"};
    {\ar_-{U_s} "ts"; "k"};
    {\ar^-{U_l} "tl"; "k"};
    (0,0)*+{\bot}="v";
  \end{xy}
  \]
\end{example}
  
\begin{nonexample}[$G$-categories with $G$-weak equivalences]
  Let $G\mh\mathit{we}$ denote the class of 
  $G$-weak equivalences, i.e., the equivariant functors $F\cn \cC\to
  \cD$ that induce weak homotopy equivalences on fixed points $\cC^H \to \cD^H$
  for all subgroups $H$ of $G$ \cite{may1996alaska,BMOOPY}. These are
  the weak equivalences of primary homotopical interest.

  The counit of the adjunction
  \[
  \begin{xy}
    0;<15mm,0mm>:<0mm,15mm>:: 
    (-1,0)*+{G\mh\Cat_s}="ts";
    (1,0)*+{G\mh\Cat_{ps}}="tl";
    {\ar@/_3mm/_-{i} "ts"; "tl"};
    {\ar@/_3mm/_-{Q} "tl"; "ts"};
    (0,0)*+{\bot}="v";
  \end{xy}
  \]
  is not a $G$-weak equivalence unless $G$ is trivial. Indeed, for the
  terminal category $*$ with trivial $G$-action, $Q(*)=EG$, the
  category with set of objects equal to $G$ and a unique morphism
  between any two objects.  The action of $G$ on $EG$ is given by left
  multiplication. The counit $EG \to *$ is a non-equivariant weak homotopy
  equivalence but not a $G$-weak equivalence since $EG^H$ is the empty
  category for all nontrivial subgroups.
  This means that $Q \dashv i$ is not an adjoint equivalence of
  homotopy theories between $(G\mh\Cat_s, \, G\mh\mathit{we})$ and
  $(G\mh\Cat_{ps},\cW_{ps})$ for \emph{any} choice of $\cW_{ps}$.
\end{nonexample}

\subsection{Reduced diagrams in a 2-category}
\label{sec:reduced-diagrams}

\begin{note}
  We remind the reader that all limits and colimits are to be
  interpreted in the $\Cat$-enriched sense.  Thus a terminal object
  $*$ in $\cK$ is one such that $\cK(x,*)$ is the terminal category
  for all objects $x$.
\end{note}

In this section, let $I$ be a small 2-category with a zero object 0
and let $\cK$ be a complete and cocomplete 2-category with terminal
object $*$.
\begin{defn}
  A \emph{reduced diagram} is a 2-functor $X\cn I \to \cK$ such that
  $X(0) \cong *$.
\end{defn}

Let $[I,\cK]_{\mathrm{red}}$ denote the 2-category of reduced
2-functors, all 2-natural transformations, and modifications.  Let
\[
j\cn [I,\cK]_{\mathrm{red}} \monoto [I,\cK]
\]
denote the inclusion of reduced diagrams into all diagrams.
We define 
\[
R\cn [I,\cK] \to [I,\cK]_{\mathrm{red}} 
\]
using a quotient construction as follows.  If $X$ is any diagram and
$a \in I$, define $RX(a)$ by the pushout below. 
\[
\begin{xy}
  0;<20mm,0mm>:<0mm,15mm>:: 
  (0,0)*+{X(0)}="x0";
  (1,0)*+{X(a)}="xa";
  (0,-1)*+{*}="pt";
  (1,-1)*+{RX(a)}="rxa";
  {\ar^-{X(!)} "x0"; "xa"};
  {\ar "pt"; "rxa"};
  {\ar "x0"; "pt"};
  {\ar "xa"; "rxa"};
\end{xy}
\]
This levelwise pushout is a pushout in the 2-category $[I,\cK]$ as we
now explain.  Let $c_{0}X$ be the constant diagram on $X(0)$.  The
maps $X(0 \to a)$ for $a \in I$ are the components of a map of
diagrams $c_0 X \to X$.  The diagram $RX$ is then the pushout below.
\[
\begin{xy}
  0;<20mm,0mm>:<0mm,15mm>:: 
  (0,0)*+{c_{0}X}="x0";
  (1,0)*+{X}="xa";
  (0,-1)*+{*}="pt";
  (1,-1)*+{RX}="rxa";
  {\ar "x0"; "xa"};
  {\ar "pt"; "rxa"};
  {\ar "x0"; "pt"};
  {\ar "xa"; "rxa"};
\end{xy}
\] 
This construction is reduced because a pushout along an isomorphism is
an isomorphism.  The universal property of the 2-categorical pushout
shows that $R$ is a 2-functor $[I,\cK] \to [I,\cK]_{\mathrm{red}}$.

\begin{prop}
  The construction $R$ above is left 2-adjoint to the inclusion 
  \[
  j\cn [I,\cK]_{\mathrm{red}} \monoto [I,\cK].
  \]
\end{prop}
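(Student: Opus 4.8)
The plan is to establish the adjunction $R \dashv j$ by exhibiting a unit and counit and verifying the triangle identities, working 2-categorically throughout. First I would observe that since $X(0) \cong *$ for a reduced diagram $X$, the map $c_0 X \to X$ is an isomorphism, so the defining pushout for $RX$ is a pushout along an isomorphism; hence the canonical comparison $X \to RjX$ (equivalently $RX$ when $X$ is already reduced) is an isomorphism. This gives the counit $\epz \cn Rj \Rightarrow \Id$, which is in fact invertible. For the unit $\eta \cn \Id \Rightarrow jR$, I would use the map $X \to RX$ appearing as the right-hand vertical arrow in the pushout square defining $RX$; naturality in $X$ follows from the functoriality of pushouts, and naturality in the 2-categorical sense (compatibility with 2-cells, i.e., the modification components) follows from the universal property of the 2-categorical pushout, which the excerpt has already noted makes $R$ a 2-functor.

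Next I would verify the two triangle identities. The identity $\epz R \circ R\eta = \id_R$ says that applying $R$ to the unit and then the counit gives the identity on $RX$; since $RX$ is reduced, $R$ applied to $X \to RX$ followed by the (invertible) counit recovers the identity, which one reads off directly from the pushout defining $R(RX)$ together with idempotency of the reduction (a second pushout along the now-iso map $c_0(RX) \to RX$ changes nothing). The identity $j\epz \circ \eta j = \id_j$ says that for a reduced $Y$, the composite $Y \to RY \to Y$ is the identity; but for reduced $Y$ the map $Y \to RY$ is exactly the inverse of the counit isomorphism, so this composite is the identity by construction. Both identities are thus immediate from the pushout description once one knows the counit is the canonical isomorphism.

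Alternatively, and perhaps more cleanly, I would phrase this as a general fact about reflective subcategories: $[I,\cK]_{\mathrm{red}}$ is the full sub-2-category of diagrams $X$ with $X(0) \cong *$, and the construction $R$ is idempotent with the property that $X \to RX$ is an isomorphism precisely when $X$ is reduced. A 2-functor equipped with a natural transformation $\Id \Rightarrow jR$ that is inverted by $R$ and whose components are isomorphisms on the subcategory automatically assembles into a 2-adjunction $R \dashv j$; this is the 2-categorical analogue of the standard characterization of reflective subcategories, and it applies verbatim here since all the relevant universal properties are $\Cat$-enriched. I would cite this principle and then just check the three inputs: $R$ is a 2-functor (done in the excerpt), $\eta_X \cn X \to jRX$ is 2-natural, and $\eta$ is an isomorphism on reduced diagrams (because pushing out along the isomorphism $c_0 X \to X$ yields an isomorphism).

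The main obstacle I anticipate is not the adjunction bookkeeping but the 2-dimensional naturality: one must check that $\eta$ is a genuine 2-natural transformation, i.e., that its mate interacts correctly with modifications between diagrams, and that $R$ as defined levelwise genuinely lands in $[I,\cK]_{\mathrm{red}}$ with the correct action on 2-cells. Both follow from the universal property of the 2-categorical pushout (which represents cocones enriched in $\Cat$), so the work is in carefully spelling out that the levelwise pushouts cohere into a pushout in the functor 2-category $[I,\cK]$ — a point the excerpt flags as needing explanation and which I would make precise by noting that colimits in $[I,\cK]$ are computed pointwise and that the 2-categorical universal property is tested on representables. Everything else reduces to the observation that a pushout along an isomorphism is an isomorphism.
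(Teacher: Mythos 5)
Your strategy is the same as the paper's: take the unit to be the pushout comparison $\eta_X \cn X \to jRX$, observe it is invertible when $X$ is reduced, define the counit as its inverse, and verify the triangle identities, with $j\epz \circ \eta j = \id$ immediate from the definition of $\epz$ and the other identity reduced to the equality $R\eta_X = \eta_{RX}$ (this is what your appeal to ``idempotency of the reduction'' amounts to; the paper makes it precise by reading both maps off the uniqueness clause of the pushout's universal property, and you should expect to do the same rather than just ``read it off''). Your alternative packaging via the standard characterization of reflective sub-2-categories is a legitimate reformulation of the same content.

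There is, however, a concrete error in the step you invoke three times. For a reduced diagram $X$ the map $c_0 X \to X$ is \emph{not} an isomorphism: its component at $a$ is $X(0 \to a) \cn X(0) \to X(a)$, and $X(a)$ need not be trivial; likewise $c_0(RX) \to RX$ is not ``now-iso''. The isomorphism is the \emph{other} leg of the defining span, namely $c_0 X \to *$, which is invertible precisely because $X(0) \iso *$ and the terminal object of $[I,\cK]$ is the constant diagram at $*$. As literally written your argument proves the wrong thing: if $c_0 X \to X$ were invertible, then its pushout $* \to RX$ would be invertible, i.e.\ $RX \iso *$, which is not the conclusion you need. The correct statement --- the pushout of an isomorphism along an arbitrary map is an isomorphism, applied to $c_0 X \fto{\iso} *$ --- yields exactly that $\eta_X \cn X \to RX$ is invertible for reduced $X$ (and similarly that $\eta_{RX}$ is invertible), which is how the paper argues. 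With that substitution, and with the triangle identity $\epz R \circ R\eta = \id_R$ justified by the uniqueness argument giving $R\eta_X = \eta_{RX}$, your proof goes through and coincides with the paper's.
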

\begin{proof}

  For any diagram $X$ we have
  1-cells
  \[
  X(a) \to RX(a),
  \]
  which form a 2-natural transformation
  \[
  \eta_X \cn X \to jRX
  \]
  by the 2-dimensional nature of the universal property of the
  pushout. As $X$ varies in $[I,\cK]$, these assemble into a 2-natural
  transformation $\eta$ from the identity to $jR$.
  
  If $Y$ is reduced, these 1-cells are isomorphisms because each
  pushout along the isomorphism $Y(0) \iso *$ is an isomorphism.
  Their inverses give a 2-natural transformation
  \[
  \epz_Y \cn RjY \to Y,
  \]
  which will assemble into a 2-natural transformation from $Rj$ to the
  identity 2-functor on $[I,\cK]_{\mathrm{red}}$.  One of the triangle
  identities is immediate from the definition of $\epz$. To prove the other, that
  \[
  \begin{xy}
    0;<20mm,0mm>:<0mm,15mm>:: 
    (0,0)*+{RX}="rx";
    (1,0)*+{RjRX}="rjrx";
    (1,-1)*+{RX}="rx2";
    {\ar^{R\eta_X} "rx"; "rjrx"};
    {\ar@{=} "rx"; "rx2"};
    {\ar^-{\epz_{RX}} "rjrx"; "rx2"};
  \end{xy}
  \] 
  commutes, we prove that for an object $a$ of $I$, the 1-cells
  $(R\eta_X)_a$ and $(\eta_{RX})_a$ are equal. Now $(R\eta_X)_a$ is
  defined as the unique dotted 1-cell that makes the diagram
  \[
  \begin{xy}
    0;<20mm,0mm>:<0mm,20mm>:: 
    (0,0)*+{X(0)}="x0";
    (1,0)*+{X(a)}="xa";
    (0,-1)*+{*}="pt";
    (1,-1)*+{RX(a)}="rxa";
    (1.7,-.7)*+{RX(a)}="xb";
    (0.7,-1.7)*+{*}="pt2";
    (1.7,-1.7)*+{RRX(a)}="rxb";
    {\ar^-{X(!)} "x0"; "xa"};
    {\ar "pt"; "rxa"};
    {\ar "x0"; "pt"};
    {\ar "pt2"; "rxb"};
    {\ar@{=} "pt"; "pt2"};
    {\ar^-{(\eta_{RX})_a} "xb"; "rxb"};
    {\ar^-{(\eta_X)_a} "xa"; "xb"};
    {\ar_-{(\eta_X)_a} "xa"; "rxa"};
    {\ar@{-->}_-{(R\eta_X)_a} "rxa"; "rxb"}; 
  \end{xy}
  \] 
  commute, but $(\eta_{RX})_a$ is also such a morphism so $(R\eta_X)_a
  = (\eta_{RX})_a$.  This verifies the second triangle identity, since
  $\epz$ was defined as the inverse of $\eta$.
\end{proof}

\begin{prop}\label{prop:j-monadic}
  Let $\cK$ be a complete and cocomplete 2-category.  
  Then the inclusion
  \[
  j\cn [I,\cK]_{\mathrm{red}} \monoto [I,\cK]
  \]
  is 2-monadic and the associated 2-monad is accessible.
\end{prop}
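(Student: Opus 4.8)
The plan is to verify the three hypotheses of the $\Cat$-enriched monadicity theorem (quoted in the excerpt just after the definition of 2-monadic): that $j$ has a left 2-adjoint, that $j$ is conservative, and that $[I,\cK]_{\mathrm{red}}$ has, and $j$ preserves, the relevant coequalizers. The first of these is exactly the preceding proposition, which exhibits $R \dashv j$, so that step is already done. For conservativity, I would argue directly: a 2-natural transformation $\theta\cn X \to Y$ between reduced diagrams is an isomorphism in $[I,\cK]_{\mathrm{red}}$ precisely when it is one in $[I,\cK]$ (the inclusion is full on 1-cells and 2-cells), and the latter holds iff each component $\theta_a\cn X(a) \to Y(a)$ is an isomorphism in $\cK$; since $j$ is the identity on underlying data this is immediate. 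In fact $j$ is fully faithful, which makes conservativity automatic.

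For the coequalizer condition, the cleanest route is to observe that $j$ is the inclusion of a \emph{reflective} full sub-2-category: $[I,\cK]_{\mathrm{red}}$ is closed in $[I,\cK]$ under all 2-categorical limits that exist — since a limit of diagrams is computed pointwise and $\cK(x,-)$ preserves the terminal object, a (weighted) limit of reduced diagrams evaluated at $0$ is a limit of copies of $*$, hence again $*$ — and more to the point, $[I,\cK]_{\mathrm{red}}$ has all colimits, computed by applying $R$ to the colimit in $[I,\cK]$. Given a pair of strict maps $f,g\cn X \rightrightarrows Y$ in $[I,\cK]_{\mathrm{red}}$ whose image under $j$ admits a split coequalizer in $[I,\cK]$, that split coequalizer is an absolute colimit and so is preserved by $R$; but on reduced diagrams $R$ is naturally isomorphic to the identity (via the counit $\epz$), so the coequalizer already lives, up to isomorphism, in $[I,\cK]_{\mathrm{red}}$ and is preserved by $j$. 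This handles exactly the $U$-split (here $j$-split) coequalizers that the $\Cat$-enriched Beck monadicity theorem requires.

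Finally, accessibility of the associated 2-monad $T' = j \circ R$: since $j$ preserves all limits and colimits (being both a right adjoint to $R$ and, as a full reflective inclusion closed under colimits, also cocontinuous on the nose once one notes reduced-diagram colimits are pushouts of honest colimits) and $R$ preserves colimits as a left adjoint, $T'$ preserves all colimits, in particular $\kappa$-filtered ones for every $\kappa$; hence $T'$ is accessible. This mirrors exactly the argument given for $\cref{prop:i-monadic}$.

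The main obstacle is the coequalizer clause: one must be careful that the relevant colimits are the \emph{2-categorical} (i.e. $\Cat$-weighted) ones and that the pushout defining $R$ interacts correctly with split coequalizers. The key simplification is that split coequalizers are absolute, so no delicate computation is needed — $R$ preserves them for formal reasons, and its restriction to reduced diagrams is (naturally isomorphic to) the identity. Everything else is a routine transcription of the ordinary monadicity argument into the $\Cat$-enriched setting, parallel to the treatment of $U\cn [I,\cK] \to [\ob I,\cK]$ in $\cref{prop:i-monadic}$.
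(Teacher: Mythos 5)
Your overall strategy is the paper's: verify the $\Cat$-enriched Beck conditions using the left 2-adjoint $R$ from the preceding proposition, conservativity (immediate since $j$ is fully faithful and isomorphisms are levelwise), and the coequalizer clause. Your treatment of the coequalizers differs from the paper's but is workable: the paper simply notes that \emph{all} coequalizers (indeed all connected colimits) of reduced diagrams are computed levelwise, because a coequalizer of terminal objects is terminal, so they exist in $[I,\cK]_{\mathrm{red}}$ and $j$ preserves them; you instead invoke absoluteness of $j$-split coequalizers. That route is fine, though to get preservation by $j$ you should still observe that the split coequalizer $Z$ formed in $[I,\cK]$ is itself reduced (its value at $0$ is a split coequalizer of objects isomorphic to $*$, hence terminal), or equivalently apply $j$ to the split coequalizer created in $[I,\cK]_{\mathrm{red}}$ and compare the two coequalizers of the same pair.

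The genuine flaw is in the accessibility step. The claim that $j$ preserves \emph{all} colimits is false: already for $\cK=\Cat$, a binary coproduct of reduced diagrams, computed in $[I,\cK]$, has value $*\sqcup *$ at $0$, which is not terminal, so the coproduct in $[I,\cK]_{\mathrm{red}}$ is $R$ applied to the coproduct in $[I,\cK]$ and is not preserved by $j$ (similarly for the initial object). Your parenthetical justification --- that colimits of reduced diagrams are obtained by a further pushout --- describes how colimits are computed in $[I,\cK]_{\mathrm{red}}$ but does not give cocontinuity of $j$; if anything it shows why $j$ fails to be cocontinuous. The asserted parallel with \cref{prop:i-monadic} also breaks down: there the forgetful 2-functor has a right 2-adjoint (right Kan extension) and hence preserves all colimits, whereas $j$ is itself a right adjoint and has no right adjoint in general. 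What you actually need is weaker and true, and is the paper's argument: filtered (indeed all connected) colimits in $[I,\cK]_{\mathrm{red}}$ are computed levelwise, since a connected colimit of terminal objects is terminal, so $j$ preserves them; since $R$ preserves all colimits as a left 2-adjoint, the associated 2-monad $jR$ preserves filtered colimits and is therefore accessible. With that correction the proof goes through.
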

\begin{proof}
  We apply the $\Cat$-enriched version of Beck's
  monadicity theorem \cite{Dub1970Kan}.  To do this, we have only to check that $j$
  is conservative, has a left adjoint, and preserves certain
  coequalizers.  We have already constructed the left adjoint above.
  Isomorphisms of diagrams are levelwise isomorphisms in both
  categories, so $j$ is conservative.  One can easily verify that
  coequalizers in $[I,\cK]_{\mathrm{red}}$ are computed levelwise,
  hence they exist and $j$ preserves all of them.  The same is also
  true for filtered colimits (in fact all connected colimits), so $j$
  preserves them and therefore the associated 2-monad is accessible.
\end{proof}

The 2-category $[I,\cK]_{\mathrm{red}}$ is also complete and
cocomplete as a 2-category since it is a full reflective 2-category of
a complete and cocomplete 2-category: limits are computed levelwise,
and colimits are computed by first applying $j$, then taking the
colimit in $[I,\cK]$, and then applying $R$ to get a reduced
diagram. For the remainder of this section we let $U$ denote the
composition of $j$ with pullback along the inclusion $\ob I \monoto
I$.  The same arguments as above prove the next result.

\begin{prop}\label{prop:ij-monadic}
  The composite
  \[
  U \cn [I,\cK]_{\mathrm{red}} \monoto [I,\cK] \monoto [\ob I, \cK]
  \]
  is 2-monadic and the associated 2-monad is accessible.
\end{prop}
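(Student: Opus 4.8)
The plan is to follow the proof of \cref{prop:j-monadic} essentially verbatim, verifying the three hypotheses of the $\Cat$-enriched Beck monadicity theorem \cite{Dub1970Kan} directly for the composite $U$; I would deliberately \emph{not} attempt to deduce 2-monadicity of $U$ from 2-monadicity of its two factors, since a composite of 2-monadic functors need not be 2-monadic. Recall that $U$ is $j$ followed by the restriction 2-functor $[I,\cK] \to [\ob I,\cK]$ along $\ob I \monoto I$. The first step is to exhibit a left 2-adjoint: the restriction 2-functor has a left 2-adjoint $\Lan$ by left Kan extension (which exists since $\cK$ is cocomplete, as used before \cref{prop:i-monadic}), and $j$ has the left 2-adjoint $R$ constructed above, so $U$ has left 2-adjoint $R \circ \Lan$.

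Next I would check that $U$ is conservative. Since $\ob I$ contains all the objects of $I$, the functor $U$ sends a 2-natural transformation to the family of \emph{all} of its 1-cell components; a 2-natural transformation between (reduced) diagrams is invertible precisely when all its components are; and isomorphisms in $[\ob I,\cK] \simeq \cK^{\ob I}$ are exactly the levelwise ones. Hence $U$ reflects isomorphisms. Then I would check the coequalizer condition: just as in \cref{prop:j-monadic}, coequalizers in $[I,\cK]_{\mathrm{red}}$ exist and are computed levelwise --- a levelwise coequalizer of reduced diagrams is again reduced, since at the zero object one coequalizes a parallel pair of 1-cells between objects isomorphic to $*$ --- and the same holds in $[\ob I,\cK] \simeq \cK^{\ob I}$, so $U$ preserves and reflects all coequalizers. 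The $\Cat$-enriched monadicity theorem then yields that $U$ is 2-monadic. For accessibility of the associated 2-monad $T' = U \circ R \circ \Lan$, observe that the restriction 2-functor has both a left and a right 2-adjoint (Kan extensions, available since $\cK$ is complete and cocomplete), hence preserves all colimits; that $\Lan$ and $R$ are left 2-adjoints and so preserve all colimits; and that $j$ preserves filtered colimits --- indeed all connected colimits --- by \cref{prop:j-monadic}. Composing these four functors shows $T'$ preserves filtered colimits, so it is accessible.

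The main obstacle is really only the opening remark: one must resist invoking a (nonexistent) closure of 2-monadic functors under composition and instead re-run Beck's argument as above. Once that is granted, every remaining verification is a levelwise check already performed for $j$ in \cref{prop:j-monadic} and for the restriction along $\ob I \monoto I$ in the discussion preceding \cref{prop:i-monadic}, so no genuine difficulty arises.
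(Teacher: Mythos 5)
Your proposal is correct and follows essentially the same route as the paper, which simply says "the same arguments as above prove the next result": one re-runs the $\Cat$-enriched Beck monadicity argument for the composite (left 2-adjoint $R\circ\Lan$, conservativity, levelwise coequalizers) and gets accessibility from the colimit-preservation properties of $\Lan$, $R$, $j$, and the restriction 2-functor. Your explicit checks, including the observation that levelwise coequalizers of reduced diagrams remain reduced, fill in exactly the details the paper leaves implicit.
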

Let $T$ be the composite $U \circ R \circ \Lan$, the 2-monad
associated with the composite adjunction.  Then $T\mh\Alg_s$ is $[I,\cK]_{\mathrm{red}}$.
\begin{prop}\label{prop:red-diagrams}
  The 2-category $T\mh\Alg_l$ is $[I,\cK]_{\mathrm{red},l}$, the
  2-category of reduced diagrams, lax transformations,
  and modifications.
\end{prop}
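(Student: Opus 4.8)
The plan is to reduce \cref{prop:red-diagrams} to \cref{prop:diagrams} by exploiting the fact that the reduced setting is obtained from the unreduced one by a reflective inclusion, and that this reflection interacts well with the two-sided description of lax algebra maps via Bourke's 2-dimensional monadicity. Concretely, write $S$ for the 2-monad on $[\ob I,\cK]$ whose strict algebras are $[I,\cK]_{\mathrm{red}}$ (so $S = U\circ R\circ\Lan$ as stated), and $S'$ for the 2-monad whose strict algebras are $[I,\cK]$ (so $S' = U'\circ\Lan$ in the notation of \cref{sec:diagrams}). First I would record the comparison of monads: the reflection $R \dashv j$ is the identity on underlying objects in $[\ob I,\cK]$ (the pushout defining $RX$ is computed over $X(0)$, which is discarded by pullback to $\ob I$ only at the zero object — so one must be slightly careful and instead observe that $R$ is idempotent and $j$ is full), giving a monad morphism $S' \to S$ over $[\ob I,\cK]$ in the appropriate direction. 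The upshot I want is that lax algebra maps for $S$ are exactly those lax algebra maps for $S'$ whose domain and codomain are reduced; this is the content one expects, since laxity concerns only the algebra structure cells, which are inherited.

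Second, following the method promised for \cref{prop:diagrams} in \cref{sec:applications-of-Bourke-monadicity}, I would invoke Bourke's characterization: for a 2-monad arising as $U''\circ\Lan$ for a suitable 2-functor $U''$, the lax algebra maps are identified with certain morphisms described directly in terms of $U''$ and the lax-limit/colax-colimit structure. Applying this to the composite $U\cn [I,\cK]_{\mathrm{red}}\monoto[I,\cK]\monoto[\ob I,\cK]$ of \cref{prop:ij-monadic}, the lax $T$-algebra maps unwind to: a 1-cell in $[\ob I,\cK]$ between reduced diagrams, together with, for each 1-cell $a\to b$ of $I$, a comparison 2-cell, subject to the same naturality and coherence constraints as in the unreduced case — because those constraints are phrased levelwise and over the generating 1-cells of $I$, none of which are affected by the reducedness condition $X(0)\cong *$. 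This produces exactly a lax transformation between the reduced diagrams, giving the 2-equivalence $T\mh\Alg_l \iso [I,\cK]_{\mathrm{red},l}$ on 1-cells; the identification on 2-cells (modifications on both sides) is immediate since 2-cells involve no structure constraints.

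The main obstacle, and where I would spend the real work, is the bookkeeping in the first step: verifying that the monad morphism $S'\to S$ is of the right shape so that Bourke's machinery transports cleanly, and in particular that no spurious extra data or extra axioms appear for lax $S$-maps beyond "reduced plus lax $S'$-map". The subtlety is that $R$ is a nontrivial pushout construction, so a priori a lax $S$-algebra structure could see the quotiented part $RX(a)$ rather than $X(a)$; one must check that for reduced $X$ the comparison cells $X(a)\to RX(a)$ are the identities (or canonical isomorphisms inverted in the zero object), so that the two descriptions literally coincide. Once that is pinned down, the rest is the same "straightforward, if uninteresting, calculation" alluded to for \cref{prop:diagrams}. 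Accordingly I would prove \cref{prop:diagrams} and \cref{prop:red-diagrams} in parallel in \cref{sec:applications-of-Bourke-monadicity}, treating the reduced case as a restriction of the unreduced argument along the reflective inclusion $j$, so that essentially one proof covers both.
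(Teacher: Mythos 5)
There is a genuine gap here, in two places. First, your appeal to Bourke's theorem is not what \cref{thm:2D-monadicity} actually provides: that theorem does not ``unwind'' lax $T$-algebra maps into explicit levelwise data in terms of $U$ and $\Lan$. Rather, it starts from a \emph{candidate} 2-category of lax maps, here $[I,\cK]_{\mathrm{red},l}$, sitting over $\cB = [\ob I,\cK]$ via $j\cn \cA_\tau \to \cA_\la$, and concludes $\cA_\la \simeq T\mh\Alg_l$ only after one verifies its hypotheses: 2-monadicity of $H_\tau$ (this is \cref{prop:ij-monadic}), existence of colax limits of arrows in $j$ and in $\cB$, local faithfulness of $H_\la$ together with reflection of identity 2-cells, and lax doctrinal adjunction. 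Your proposal never verifies any of these for the reduced case; the explicit unwinding you describe instead (``a comparison 2-cell for each 1-cell $a\to b$ of $I$, subject to the same constraints as in the unreduced case'') is precisely the nontrivial computation that the 2-dimensional monadicity machinery is invoked to avoid, and it is not straightforward here because $T = U\circ R\circ \Lan$ involves the reflection $R$, so the free algebras --- and hence the coherence data of a lax $T$-algebra map --- are built from quotients.

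Second, the monad-morphism reduction you lean on, namely that lax algebra maps for the reduced monad $S$ are exactly lax algebra maps for the unreduced monad $S'$ with reduced domain and codomain, is asserted rather than proven, and it is essentially a restatement of the proposition itself. The lax structure 2-cell of an $S$-algebra map lives over $S(UA)$, which involves $R$, so the claim that ``laxity concerns only the algebra structure cells, which are inherited'' begs the question; and your proposed fix (checking that the unit components $X(a)\to RX(a)$ are isomorphisms for reduced $X$) addresses the algebras but not the comparison of lax morphism data between the two monads. The paper's proof avoids all of this: it treats $\cA_\tau = [I,\cK]_{\mathrm{red}}$ and $\cA_\tau = [I,\cK]$ in parallel and checks the five hypotheses of \cref{thm:2D-monadicity} directly --- colax limits computed levelwise in $\cB$, local faithfulness of $H_\la$ in the unreduced case (which immediately gives the reduced case), and lax doctrinal adjunction by assembling the object-wise adjoints $f_a \dashv g_a$ into a lax transformation $\ol{g}$ using 2-naturality of $f$ and the triangle identities. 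If you want to keep your strategy, you would need to supply a genuine comparison theorem relating lax algebra 2-cells along the monad map $S'\to S$; as written, the central identification is unproved.
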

We prove \cref{prop:red-diagrams} using Bourke's 2-dimensional
monadicity in \cref{sec:applications-of-Bourke-monadicity}.
 
\begin{example}[$\Ga$-objects and levelwise weak equivalences]\label{eg:gamma-objects}
  Let $I$ be a skeleton of the category of finite based sets
  considered as a discrete 2-category and let $\cK$ be either $\Cat$
  or $\IICat_2$, the 2-category of 2-categories, 2-functors, and
  2-natural transformations.  Then $[I,\Cat]_{\mathrm{red}}$ is the
  2-category of $\Ga$-categories, $\Ga$-functors, and
  $\Ga$-transformations \cite{Seg74Categories}, while
  $[I,\IICat_{2}]_{\mathrm{red}}$ is the 2-category of
  $\Ga$-2-categories, $\Ga$-2-functors, and $\Ga$-transformations
  studied in \cite{GJO2017KTheory}.

  By \cref{prop:diagrams,prop:red-diagrams}, these are 2-monadic over
  $[\ob I, \Cat] = [\mathbb{N}, \Cat]$ and, respectively,
  $[\mathbb{N}, \IICat_{2}]$.  The corresponding 2-categories with lax
  algebra maps are, respectively, $[I,\Cat]_{\mathrm{red},l}$ and
  $[I,\IICat_{2}]_{\mathrm{red},l}$.  These are the 2-categories of
  $\Ga$-(2-)categories, $\Ga$-lax (2-)functors, and
  $\Ga$-transformations.  Since the 2-monads for these are accessible,
  we get a 2-adjunction $Q \dashv i$ as in \cref{prop:bourke-garner}.
  The counit of this adjunction is a right map as in
  \cref{prop:bourke-garner}~\eqref{it:bg-2}, and in particular a
  levelwise left adjoint.

  Let $\cV = \mathit{we}$ be the class of levelwise weak homotopy
  equivalences, created by the forgetful functors to $[\mathbb{N},
  \Cat]$ and $[\mathbb{N}, \IICat_{2}]$, respectively.  Being a
  levelwise adjoint, the counit is a levelwise equivalence, so by
  \cref{thm:equiv-conditions-left-merged,thm:u-creates} we have the
  following adjoint equivalences of homotopy theories.
  \[
  \begin{xy}
    0;<72.5mm,0mm>:<0mm,15mm>:: 
  (0,0)*{
  \begin{xy}
    0;<20mm,0mm>:<0mm,15mm>:: 
    (-1,0)*+{(\Ga\mh\Cat_s,\, \mathit{we})}="ts";
    (1,0)*+{(\Ga\mh\Cat_l,\, \mathit{we})}="tl";
    (0,-1)*+{([\mathbb{N}, \Cat],\, \mathit{we})}="k";
    {\ar@/_3mm/_-{i} "ts"; "tl"};
    {\ar@/_3mm/_-{Q} "tl"; "ts"};
    {\ar_-{U} "ts"; "k"};
    {\ar^-{U} "tl"; "k"};
    (0,0)*+{\bot}="v";
  \end{xy}
  };
  (1,0)*{
  \begin{xy}
    0;<20mm,0mm>:<0mm,15mm>:: 
    (-1,0)*+{(\Ga\mh\IICat_s,\, \mathit{we})}="ts";
    (1,0)*+{(\Ga\mh\IICat_l,\, \mathit{we})}="tl";
    (0,-1)*+{([\mathbb{N}, \IICat_2],\, \mathit{we})}="k";
    {\ar@/_3mm/_-{i} "ts"; "tl"};
    {\ar@/_3mm/_-{Q} "tl"; "ts"};
    {\ar_-{U} "ts"; "k"};
    {\ar^-{U} "tl"; "k"};
    (0,0)*+{\bot}="v";
  \end{xy}
  }
  \end{xy}
  \]
  Note that this is a stronger result
  than what we were able to achieve using direct methods in
  \cite{GJO2017KTheory}, namely it strengthens Theorem 4.37 and
  Corollary 4.47 of \textit{loc. cit.} to equivalences of homotopy
  theories rather than just of homotopy categories.
\end{example}

\begin{example}[$\Ga$-objects and stable equivalences]\label{ex:gamma-st-equiv}
  We can also consider the class of stable equivalences, $\cV =
  \mathit{st\;eq}$, which are now created by the levelwise nerve
  functor to $\Ga\mh\sSet$ \cite{BF1978}.  The counit of
  the adjunction between the categories with strict and lax maps is a
  stable equivalence since, once again, it is a levelwise equivalence.  By
  \cref{thm:equiv-conditions-left-merged} we have the following
  adjoint equivalences of homotopy theories.
  \[
  \begin{xy}
    0;<72.5mm,0mm>:<0mm,15mm>:: 
  (0,0)*{
  \begin{xy}
    0;<20mm,0mm>:<0mm,15mm>:: 
    (-1,0)*+{(\Ga\mh\Cat_s,\, \mathit{st\;eq})}="ts";
    (1,0)*+{(\Ga\mh\Cat_l,\, \mathit{st\;eq})}="tl";
    {\ar@/_4.5mm/_-{i} "ts"; "tl"};
    {\ar@/_4.5mm/_-{Q} "tl"; "ts"};
    (0,0)*+{\bot}="v";
  \end{xy}
  };
  (1,0)*{
  \begin{xy}
    0;<20mm,0mm>:<0mm,15mm>:: 
    (-1,0)*+{(\Ga\mh\IICat_s,\, \mathit{st\;eq})}="ts";
    (1,0)*+{(\Ga\mh\IICat_l,\, \mathit{st\;eq})}="tl";
    {\ar@/_4.5mm/_-{i} "ts"; "tl"};
    {\ar@/_4.5mm/_-{Q} "tl"; "ts"};
    (0,0)*+{\bot}="v";
  \end{xy}
  };
  \end{xy}
  \]
  This is a strengthening of \cite[Corollary 4.49]{GJO2017KTheory}.
\end{example}

\section{2-dimensional monadicity}
\label{sec:2D-monadicity}

In this section we recall and apply the 2-dimensional monadicity of
Bourke \cite{Bou2014Two}.  This goes beyond elementary $\Cat$-enriched
monadicity as it accounts simultaneously for both strict and lax
algebra maps.  This enables us to identify the bare-handed notions of
lax morphisms as the lax algebra morphisms for iterated monoidal
categories (\cref{eg:n-mon,eg:n-mon-gc}) and for $\Ga$-(2-)categories
(\cref{eg:gamma-objects,ex:gamma-st-equiv}).

\subsection{The 2-dimensional monadicity theorem for lax maps}
\label{sec:2D-monadicity-theorem}

Throughout this section, let $\cB$ be a 2-category and let $j\cn
\cA_\tau \to \cA_\la$ be a 2-functor over $\cB$ via 2-functors
$H_\tau\cn \cA_\tau \to \cB$ and $H_\la\cn \cA_\tau \to \cB$ as below.
\begin{equation}\label{eq:2d-monadicity-setup}
  \begin{xy}
    0;<15mm,0mm>:<0mm,15mm>:: 
    (-1,0)*+{\cA_\tau}="at";
    (1,0)*+{\cA_\la}="ala";
    (0,-1)*+{\cB}="b";
    {\ar^{j} "at"; "ala"};
    {\ar_-{H_\tau} "at"; "b"};
    {\ar^-{H_\la} "ala"; "b"};
  \end{xy}
\end{equation} 
We further assume that $j$ is:
\begin{enumerate}
\item the identity on objects,
\item locally full and faithful on 2-cells, and
\item faithful on 1-cells.
\end{enumerate}
In particular $j$ induces a map extension on underlying 1-categories.
This is the notion of $\cF$-category introduced in
\cite{LS2012Enhanced}.  We will often suppress the subscripts on $H$
as they are clear from context.

\begin{rmk}
  For the remainder of this section we restrict to considering the
  strict/lax case.  Analogous versions of the theory for
  strict/pseudo and for strict/oplax can be found in \cite{Bou2014Two}.
\end{rmk}

\begin{defn}[strict/lax monadic]\label{defn:sl-monadic}
  We say that the pair $(H_\tau, H_\la)$ is \emph{strict/lax monadic}
  if there are 2-equivalences
  \[
  \cA_\tau \hty T\mh\Alg_s
  \]
  \[
  \cA_\la \hty T\mh\Alg_l
  \]
  over $\cB$ for some 2-monad $T$ on $\cB$ such that
	\[
  \begin{xy}
    0;<28mm,0mm>:<0mm,13mm>:: 
    (0,0)*+{\cA_\tau}="at";
    (0,-1)*+{T\mh\Alg_s}="ts";
    (1,0)*+{\cA_\la}="al";
    (1,-1)*+{T\mh\Alg_l}="tl";
    {\ar "at"; "ts"};
    {\ar_{j} "ts"; "tl"};
    {\ar^{j} "at"; "al"};
    {\ar "al"; "tl"};		
  \end{xy}
  \]
  commutes.
\end{defn}

Conditions for a given pair to be strict/lax monadic will be given
below, and rely on the following definitions.  Note that we have
suppressed the inclusion $j$ in what follows.

\begin{defn}[Colax limit]\label{defn:colax-lim}
  Given $f\cn A \to B$ in $\cA_\la$, the \emph{colax limit} of $f$
  consists of 1-cells $p_f$ and $q_f$ in $\cA_\tau$ and a 2-cell 
    $\si_f$ in $\cA_\la$
  \[\begin{xy}
    0;<15mm,0mm>:<0mm,15mm>:: 
    (0,1)*+{C_f}="cf";
    (-1,0)*+{A}="a";
    (1,0)*+{B}="b";
    {\ar_-{p_f} "cf"; "a"};
    {\ar^-{q_f} "cf"; "b"};
    {\ar_-{f} "a"; "b"};    
    {\ar@{=>}^-{\si_f} (.1,.5)="w"; "w"+(-.2,-.2) };
  \end{xy}\] 
  such that the following conditions hold.
  \begin{enumerate}
  \item Given 1-cells $r\cn X \to A$, $s\cn X \to B$ in $\cA_\la$ and
    a 2-cell $\al\cn s \Rightarrow fr$ in $\cA_\la$ as shown below,
    there is a unique $t \in \cA_\la$ giving the indicated equalities.
    \[\begin{xy}
      0;<15mm,0mm>:<0mm,15mm>:: 
      (0,1)*+{X}="x";
      (0,0)*+{C_f}="cf";
      (-1,-1)*+{A}="a";
      (1,-1)*+{B}="b";
      {\ar@/_5mm/_-{r} "x"; "a"};
      {\ar@/^5mm/^-{s} "x"; "b"};
      {\ar@{-->}^-{\exists !}_-{t} "x"; "cf"};
      {\ar_-{p_f} "cf"; "a"};
      {\ar^-{q_f} "cf"; "b"};
      {\ar_-{f} "a"; "b"};    
      {\ar@{=} (0,0)+(.5,.2)="w"; "w"+<-2mm,-2mm>};
      {\ar@{=} (0,0)+(-.5,.2)="w"; "w"+<2mm,-2mm>};
      {\ar@{=>}^-{\si_f} (.1,-.5)="w"; "w"+(-.2,-.2) };
    \end{xy}
    =
    \begin{xy}
      0;<15mm,0mm>:<0mm,15mm>:: 
      (0,1)*+{X}="x";
      (-1,-1)*+{A}="a";
      (1,-1)*+{B}="b";
      {\ar@/_5mm/_-{r} "x"; "a"};
      {\ar@/^5mm/^-{s} "x"; "b"};
      {\ar_-{f} "a"; "b"};    
      {\ar@{=>}^-{\al} (.125,.1)="w"; "w"+(-.25,-.25)};
    \end{xy}\]
    
  \item Let $(r,s,\al)$ and $(r',s',\al')$ be as above and let $\theta_r\cn
    r \Rightarrow r'$, $\theta_s\cn s \Rightarrow s'$ be such that 
    $\al' \theta_s = (f*\theta_r) \al$.  Then there is a unique
    $\theta_t\cn t \Rightarrow t'$ such that $ p_f * \theta_t = \theta_r$
    and $q_f * \theta_t = \theta_s$.   

  \item The structure 1-cell $t$ is in $\cA_\tau$ if and only if $r$
    and $s$ are both in $\cA_\tau$.

  \end{enumerate}
  We say that \emph{$j$ admits colax limits of arrows} if the colax
  limit exists for every arrow $f$ in $\cA_\la$.
\end{defn}

\begin{rmk}
  We can also consider the colax limit of a morphism in a mere
  2-category $\cA$, in which case $\cA = \cA_{\tau} = \cA_{\la}$ in
  the above definition and the third condition becomes vacuous.  We
  would then say that \emph{$\cA$ admits colax limits of arrows}.
\end{rmk}  

\begin{defn}[Lax doctrinal adjunction]
  We say that the pair $(H_\tau,H_\la)$ satisfies \emph{lax doctrinal
    adjunction} if given $f\cn A \to B$ in $A_\tau$ and an adjunction
  $(H_\tau f \dashv g, \epz, \eta)$ in $\cB$, there is a unique
  adjunction $(f \dashv \ol{g}, \ol{\epz}, \ol{\eta})$ in $A_\la$ such
  that $H_\la(f \dashv \ol{g}, \ol{\epz}, \ol{\eta}) = (H_\tau f
  \dashv {g}, {\epz}, {\eta})$.
\end{defn}

\begin{thm}[\cite{Bou2014Two}]\label{thm:2D-monadicity}
  Let $j\cn \cA_\tau \to \cA_\la$ be a 2-functor over $\cB$ as in
  \eqref{eq:2d-monadicity-setup}.
  Now suppose the following:
  \begin{enumerate}    

  \item\label{it:2-mon} $H_\tau$ is 2-monadic with associated 2-monad $T$;

  \item\label{it:j-colax-lim} $j$ admits colax limits of arrows in $\cA_\la$;

  \item\label{it:B-colax-lim} $\cB$ admits colax limits of arrows;

  \item\label{it:Hla} $H_\la$ is locally faithful and reflects identity 2-cells; and 
  
  \item\label{it:lax-doc-adj} $(H_\tau, H_\la)$ satisfies lax doctrinal adjunction.
  \end{enumerate}

  Then $(H_\tau, H_\la)$ is strict/lax monadic with associated 2-monad $T$.
\end{thm}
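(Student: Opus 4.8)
The plan is to reduce the statement to hypothesis \eqref{it:2-mon} together with the universal properties of colax limits of arrows. By \eqref{it:2-mon} there is a 2-equivalence $\cA_\tau \hty T\mh\Alg_s$ over $\cB$, which I will treat as an identification, so that $\cA_\tau = T\mh\Alg_s$, $H_\tau = U_s$, and $j \cn T\mh\Alg_s \to \cA_\la$ is an $\cF$-category over $\cB$ extending $T\mh\Alg_s$. Since $j$ is the identity on objects, it suffices to produce a 2-functor $K \cn \cA_\la \to T\mh\Alg_l$ over $\cB$ which restricts to the identity on $T\mh\Alg_s$ and is an isomorphism on every hom-category; this exhibits $(H_\tau, H_\la)$ as strict/lax monadic with associated 2-monad $T$.

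To build $K$, recall that a lax $T$-algebra morphism into a fixed algebra $B$ is the same data as a strict $T$-algebra morphism into a classifying algebra $\mathbf{L}B$ whose underlying object in $\cB$ is produced as a colax limit built from the algebra structure map $b\cn TB \to B$ (a comma-object-type construction involving $TB$, $B$, and $b$). I would first show that $\mathbf{L}B$ exists and carries a $T$-algebra structure. Existence of the underlying colax limit is \eqref{it:B-colax-lim}. Its $T$-algebra structure cannot be obtained by pushing $T$ through the colax cone, since 2-monads do not preserve colax limits of arrows; instead one transports the canonical adjoint pair attached to the colax limit of $b$ --- the lali $(p_b \dashv i_b,\, \epz = \id,\, \eta)$ determined by its universal property --- into the loose 2-category using \eqref{it:lax-doc-adj}, and then descends the $T$-action along the conservative 2-monadic $H_\tau$, using its preservation of the relevant coequalizers. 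Given a loose $f \cn A \to B$ in $\cA_\la$, its colax limit in $\cA_\la$, available by \eqref{it:j-colax-lim}, is sent by $H_\la$ to a colax cone over $H_\la f$; by \eqref{it:B-colax-lim}, the tight/loose clause in \cref{defn:colax-lim}, and the assumption \eqref{it:Hla} that $H_\la$ is locally faithful and reflects identity 2-cells, this cone is in fact the colax limit of $H_\la f$ in $\cB$, with \emph{tight} projections, i.e.\ strict $T$-morphisms. Feeding $f$ into the universal property of $\mathbf{L}B$ then yields a canonical strict $T$-morphism $A \to \mathbf{L}B$, equivalently a lax-morphism structure $\ol{f}$ on $H_\la f$, and the 2-dimensional part of the universal properties, together with naturality, promotes $f \mapsto (H_\la f, \ol{f})$ to a 2-functor $K$ over $\cB$ agreeing with the identity on $T\mh\Alg_s$.

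It then remains to check that $K$ is an isomorphism on hom-categories. It is the identity on objects by construction. For faithfulness and fullness on 2-cells: a 2-cell $Kf \Rightarrow Kg$ between loose 1-cells is an underlying 2-cell $H_\la f \Rightarrow H_\la g$ in $\cB$ compatible with the lax structure 2-cells; \eqref{it:Hla} gives faithfulness, while fullness follows by carrying such a 2-cell through the colax-limit description of loose 1-cells back to $\cA_\tau = T\mh\Alg_s$, where 2-cells between the associated tight maps into $\mathbf{L}B$ are controlled by 2-monadicity, and then back to $\cA_\la$; reflection of identity 2-cells in \eqref{it:Hla} ensures no collapse occurs. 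For bijectivity on loose 1-cells with fixed endpoints: every lax $T$-morphism $A \to B$ is a strict $T$-morphism $A \to \mathbf{L}B$, hence a tight morphism $A \to \mathbf{L}B$, which, by \eqref{it:j-colax-lim} and the colax-limit universal property in $\cA_\la$, equals $Kf$ for a unique loose $f$. Hence $K$ is bijective on objects and an isomorphism on each hom-category, and strict/lax monadicity follows.

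The hard part will be the middle step: realizing the classifier $\mathbf{L}B$ as an honest $T$-algebra and checking that the bijection between strict maps $A \to \mathbf{L}B$ and lax maps $A \to B$ is natural in $A$ and compatible with composition of loose maps. The failure of $T$ to preserve colax limits of arrows is exactly why \eqref{it:lax-doc-adj} is indispensable --- it supplies the lift of the colax-limit adjunction into the loose world that makes the $T$-action available --- and why \eqref{it:Hla} is needed to keep the tight/loose bookkeeping and the coherence 2-cells of $\ol{f}$ under control. Granting that, the remaining verifications are routine manipulations of universal properties.
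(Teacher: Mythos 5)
Your central device fails: there is, in general, no $T$-algebra $\mathbf{L}B$ for which lax $T$-morphisms $A \to B$ coincide with strict $T$-morphisms $A \to \mathbf{L}B$. Classifiers for lax morphisms live on the \emph{domain} side --- the left 2-adjoint $Q$ of \cref{prop:bkp-access} gives $T\mh\Alg_l(A,B) \iso T\mh\Alg_s(QA,B)$ --- and the codomain-side classifier you posit would amount to a right 2-adjoint to $i\cn T\mh\Alg_s \to T\mh\Alg_l$, which does not exist in the situations this theorem is used for. Concretely, let $T$ be the 2-monad on $\Cat$ for symmetric monoidal categories and $A = 1$ the terminal algebra: for \emph{any} algebra $E$ there is exactly one strict map $1 \to E$ (the unit must be preserved on the nose), whereas lax maps $1 \to B$ are the commutative monoids in $B$, of which there may be arbitrarily many. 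So no choice of $\mathbf{L}B$ realizes your bijection, even for a single fixed $A$ and even non-naturally. Since your definition of $K$ on loose 1-cells, the bijectivity argument for loose 1-cells, and the fullness argument for 2-cells all pass through $\mathbf{L}B$, the proposal does not go through; the step you flag as ``the hard part'' is not hard but impossible. (Note also that the paper itself offers no proof to compare against: the theorem is quoted from \cite{Bou2014Two}.)

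The hypotheses are designed for a different mechanism, the one used in \cite{Bou2014Two}. Given a loose $f\cn A \to B$, hypothesis \eqref{it:j-colax-lim} provides the colax limit $C_f$ with \emph{tight} projections $p_f, q_f$; the 1-cell $k\cn A \to C_f$ induced by $(1_A, f, \id_f)$ satisfies $p_f k = 1_A$, $q_f k = f$, and the two-dimensional universal property produces a unit exhibiting $p_f \dashv k$ with identity counit, i.e.\ a lali structure on the tight map $p_f$ (cf.\ \cref{defn:lali}). Thus every loose morphism is assembled from tight morphisms together with an adjunction, $f = q_f \circ k$. Hypotheses \eqref{it:2-mon} and \eqref{it:B-colax-lim} let one match these colax limits against those of the algebra $\cF$-category (computed in $\cB$), hypothesis \eqref{it:lax-doc-adj} transports the adjunction uniquely across this comparison --- recovering $k$, not any classifier, on the algebra side --- and hypothesis \eqref{it:Hla} controls the 2-cells. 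Your outer framing (reduce to a comparison 2-functor over $\cB$ that is the identity on objects, agrees with the monadicity equivalence on tights, and is an isomorphism on hom-categories) is fine and can be retained, but the identification of loose 1-cells must go through this factorization-through-$C_f$ argument rather than through $\mathbf{L}B$.
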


Our goal is often to identify the lax morphisms without explicitly
computing them.  \Cref{thm:2D-monadicity} accomplishes this by
identifying the lax morphisms as the 1-cells of $\cA_\la$, possibly up
to a 2-equivalence of 2-categories.

\subsection{Applications of 2-dimensional monadicity}
\label{sec:applications-of-Bourke-monadicity}

We now give the proofs of
\cref{prop:n-MonCat-lax-is-algebras,prop:diagrams,prop:red-diagrams}.

\begin{proof}[Proof of \cref{prop:n-MonCat-lax-is-algebras}]
  We apply \cref{thm:2D-monadicity}.  Let $\cA_\tau$ and $\cA_\la$,
  respectively, be the 2-categories of $n$-fold monoidal categories
  with strict, respectively lax, maps and $n$-fold monoidal transformations.
  Let $\cB = \Cat$, let $H_\tau$ and $H_\la$ be the respective
  forgetful functors and let $j$ be the inclusion.  The only
  conditions which are not immediate are that $(H_\tau, H_\la)$ satisfies lax
  doctrinal adjunction and that $j$ admits colax limits of lax arrows.
  However both are straightforward to verify, as we now
  sketch.

  To show that $(H_\tau, H_\la)$ satisfies lax doctrinal adjunction,
  suppose that $f\cn A \to B$ is a strict map of $n$-fold monoidal
  categories and that $g\cn B \to A$ is an adjoint to the underlying
  functor of categories.  Then one can construct a lax monoidal
  structure map for $g$ via the following composite
  \[
  g(b) \otimes_i g(b') \fto{\eta} 
  gf(g(b) \otimes_i g(b')) = 
  g(fg(b) \otimes_i fg(b')) \fto{g(\epz\otimes_i\epz)}
  g(b \otimes_i b').
  \]
  This is an instance of doctrinal adjunction for $\otimes_i$
  \cite{Kelly1974Doctrinal}.  One uses the strict structure of
  $f$ and the triangle identities to verify these lax monoidal
  structures are compatible with the interchange transformations.

  To show that $j$ admits colax limits of lax maps, one constructs the
  colax limit of underlying categories and verifies that it is endowed
  with an $n$-fold monoidal structure.  If $f\cn A \to B$ is a lax map
  of $n$-fold monoidal categories then the colax limit in $\Cat$ is a
  category $C$ whose objects are triples $(a, b, \si_{b,a})$ where $a
  \in A$, $b \in B$, and $\si_{b,a}\cn b \to f(a)$ is a morphism in
  $B$.  The morphisms of $C$ consist of pairs of morphisms between the
  component objects such that the obvious squares in $B$ commute.  For
  each index $i$, the $i$th monoidal product on $C$ is determined
  componentwise by the $i$th monoidal products on $A$ and $B$ and the
  lax monoidal structure maps for $f$.  The interchange maps are given
  pairwise by those in $A$ and $B$.  The compatibility of $f$ with
  interchange ensures that this defines a valid interchange for $C$.
  Verification of the necessary axioms consists of routine diagram
  algebra which we omit for the sake of brevity.

\end{proof}

\begin{proof}[Proofs of \cref{prop:diagrams,prop:red-diagrams}]
  We apply \cref{thm:2D-monadicity}.  Let $\cA_\tau$ be either $[I,
    \cK]_{\mathrm{red}}$ or $[I, \cK]$ and let $\cB = [\ob I, \cK]$
  and let $H_\tau$ be the map induced by the inclusion $\ob I \monoto
  I$.  Let $\cA_\la$ be the category of reduced or, respectively,
  unreduced diagrams with lax transformations, and let $j\cn \cA_\tau \monoto
  \cA_\la$ be the inclusion.

  We now verify the five conditions of \cref{thm:2D-monadicity}.
  Condition \eqref{it:2-mon} is proved in
  \cref{prop:i-monadic,prop:ij-monadic}.  Condition
  \eqref{it:B-colax-lim} follows because $I$ is small and $\cK$ is
  cocomplete. Condition \eqref{it:j-colax-lim} is straightforward by
  computing colax limits levelwise; i.e., in $\cB = [\ob I, \cK]$ and
  verifying that these extend to a 2-functor on
  $I$.
  The 2-dimensional aspect of
  the universal property for the levelwise colax limit ensures the 
  universal property of the colax limit in $\cA_\la$.

  We verify condition \eqref{it:Hla} in the unreduced case, noting
  that this immediately implies the same condition for the reduced
  case.  The functor $H_\la$ is the forgetful functor $[I, \cK]_l \to
  [\ob I, \cK]$.  This is clearly locally faithful and reflects
  identity 2-cells.

  For condition \eqref{it:lax-doc-adj}, let $f\cn X \to Y$ be a
  2-natural transformation of diagrams on $I$ and let $f_a \dashv g_a$
  be an adjunction for each $a \in \ob I$.  We construct a lax
  transformation $\ol{g}$ in the following way.  For $r\cn a \to b$ in
  $I$, define a 2-cell $\ol{g}_r\cn X(r)g_a \Rightarrow g_bY(r)$ as
  the composite
  \[
  X(r) g_a \xRightarrow{\eta_b 1 1} g_b f_b X(r) g_a = 
  g_b Y(r) f_a g_a \xRightarrow{1 1 \epz_a} g_b Y(r),
  \]
  where $\eta$, respectively $\epz$, are the unit, respectively
  counit, for the object-wise adjunction between $H_\la f$ and $g$.
  The middle equality is given by the strict naturality of $f$.  To
  see that $\ol{g}$ satisfies the axioms of a lax transformation one
  uses the triangle identities and 2-naturality of $f$.  Now $(f,
  \ol{g}, \epz, \eta)$ gives the unique adjunction lifting the
  object-wise adjunction and this completes the verification of
  \eqref{it:lax-doc-adj}.
\end{proof}

\bibliographystyle{amsalpha2}
\bibliography{lax_maps.bbl}%

\end{document}